\documentclass[a4paper, 12pt]{amsart}
\usepackage{cite}
\usepackage{amsmath}
\usepackage{amsthm}
\usepackage[all]{xy}
\usepackage{amssymb}
\usepackage{mathrsfs}
\usepackage{etoolbox}
\usepackage{fullpage}

\vfuzz2pt 

\newtheorem{thm}{Theorem}
[section]

\newtheorem{lem}[thm]{Lemma}
\newtheorem{prop}[thm]{Proposition}

\theoremstyle{definition}

\newtheorem{fct}[thm]{Fact}

\numberwithin{equation}{subsection}

\newcommand{\gen}[1]{\left\langle#1\right\rangle}
\usepackage{color}
\addtolength{\topmargin}{-0.2in}
\begin{document}

\title{Algebraic independence of generic Painlev\'e  transcendents: $P_{III}$ and $P_{VI}$}
\author{Joel Nagloo* \\Bronx Community College, CUNY}
\date{\today}
\address{Department of Mathematics and Computer Science\\Bronx Community College, CUNY\\Bronx, NY 10453, United States.}
\email{joel.nagloo@bcc.cuny.edu}
\pagestyle{plain}
\subjclass[2010]{Primary 14H05; Secondary 14H70, 34M55, 03C60}
\thanks{*Research supported by NSF grant DMS-1700336 and PSC-CUNY grant \#60456-00.}
\begin{abstract}
We prove that if $y''=f(y,y',t,\alpha,\beta,\gamma,\delta)$ is a generic Painlev\'e equation from the class $III$ and $VI$, and if $y_{1},...,y_{n}$ are distinct solutions, then $tr.deg_{\mathbb{C}(t)}\mathbb{C}(t)(y_1,y'_1,\ldots,y_n,y'_n)=2n$, that is $y_1,y'_1,\ldots,$ $y_n,y'_n$ are algebraically independent over $\mathbb{C}(t)$. This improves the results obtained by the author and Pillay and completely proves the algebraic independence conjecture for the generic Painlev\'e  transcendents. In the process, we also prove that any three distinct solutions of a Riccati equation are algebraic independent over $\mathbb{C}(t)$, provided that there are no solutions in the algebraic closure of $\mathbb{C}(t)$. This answers a very natural question in the theory.
\end{abstract}
\maketitle

\section{Introduction}
The algebraic independence conjecture for the generic Painlev\'e equations $P_{I}-P_{VI}$ \cite{NagPil} states that if $y_1,\ldots,y_n$ are distinct solutions of one of the generic equations viewed as meromorphic functions on some disc $D\subset\mathbb{C}$ and if we work in the differential field $\mathcal{F}$ of meromorphic functions on $D$, then $tr.deg_{\mathbb{C}(t)}\mathbb{C}(t)(y_1,y'_1,\ldots,y_n,y'_n)=2n$, that is $y_1,y'_1,\ldots,y_n,y'_n$ are algebraically independent over $\mathbb{C}(t)$. Recall that the Painlev\'e equations are
\begin{equation*}
\begin{split}
P_{I}:\;\;\;\;\; & y'' =6y^2+t \\
P_{II}(\alpha):\;\;\;\;\; & y'' =2y^3+ty+\alpha \\
P_{III}(\alpha,\beta,\gamma,\delta):\;\;\;\;\; & y'' =\frac{1}{y}(y')^2 -\frac{1}{t}y'+\frac{1}{t}(\alpha y^2+\beta)+\gamma y^3+\frac{\delta}{y} \\
P_{IV}(\alpha,\beta):\;\;\;\;\; & y'' =\frac{1}{2y}(y')^2+\frac{3}{2}y^3+4ty^2+2(t^2-\alpha)y+\frac{\beta}{y} \\
P_{V}(\alpha,\beta,\gamma,\delta):\;\;\;\;\; & y'' =\left(\frac{1}{2y}+\frac{1}{y-1}\right)(y')^2-\frac{1}{t}y'+\frac{(y-1)^2}{t^2}\left(\alpha y+\frac{\beta}{y}\right)+\gamma\frac{y}{t}\\
& +\delta\frac{y(y+1)}{y-1}\\
\end{split}
\end{equation*} 
\begin{equation*}
\begin{split}
P_{VI}(\alpha,\beta,\gamma,\delta):\;\;\;\;\; & y'' =\frac{1}{2}\left(\frac{1}{y}+\frac{1}{y+1}+\frac{1}{y-t}\right)(y')^2-\left(\frac{1}{t}+\frac{1}{t-1}+\frac{1}{y-t}\right)y'\\
 & +\frac{y(y-1)(y-t)}{t^2(t-1)^2}\left(\alpha+\beta\frac{t}{y^2}+\gamma\frac{t-1}{(y-1)^2}+\delta\frac{t(t-1)}{(y-t)^2}\right),
\end{split}
\end{equation*}
where $\alpha$, $\beta$, $\gamma$, $\delta\in\mathbb{C}$. By generic, here we mean that the relevant complex parameters $\alpha$, $\beta$,... are algebraically independent over $\mathbb{Q}$, while the single equation $P_{I}$ is considered generic in its own class.
\par For $P_{I}$, the conjecture was shown to be true by Nishioka \cite{Nishioka} using techniques from the study of differential function fields in one variable. On the other hand using model theory, Nagloo and Pillay \cite{NagPil1} proved that the conjecture is also true in the case of the generic $P_{II}$, $P_{IV}$ and  $P_{V}$. They moreover obtained a weaker result for $P_{III}$ and $P_{VI}$: given distinct solutions $y_{1},..,y_{k}$ of generic $P_{III}$ (resp. $P_{VI}$) such that $tr.deg_{\mathbb{C}(t)}\mathbb{C}(t)(y_1,y'_1,\ldots,y_k,y'_k)= 2k$, then for all other solutions $y$, except for at most $k$ (resp. $11k$), $tr.deg_{\mathbb{C}(t)}\mathbb{C}(t)(y_1,y'_1,\ldots,y_k,y'_k,y,y')= 2(k+1)$.
\par In this paper, we show that the conjecture also holds in the case of the generic $P_{III}$ and $P_{VI}$, hence settling the question entirely. The methods used in the proof are quite general and can be used to reprove the conjecture for the generic $P_{II}$, $P_{IV}$ and  $P_{V}$ (although not done here). They combine both the differential algebraic methods used by Nishioka in \cite{Nishioka} and the model theoretic techniques from \cite{NagPil1}.
\par For simplicity, let us use the second Painlev\'e family to illustrate how the proof goes. Fix $\alpha_0\in\mathbb{C}$ a transcendental constant. First recall that $P_{II}(\alpha_0)$ is strongly minimal: if $y$ is a solution which satisfies a first order algebraic differential equation over a differential field extension $F$ of $\mathbb{C}(t)$, then $y$ is algebraic over $F$. On the other hand $P_{II}(1/2)$ is not strongly minimal. Indeed, it is well known that any solution of the Riccati equation $y'=y^2+t/2$ is also a solution of $P_{II}(1/2)$. Secondly, it is also known that $P_{II}(\alpha_0)$ is geometrically trivial, namely if every pair of distinct solutions and derivatives are algebraically independent over some differential field extension $F$ of $\mathbb{C}(t)$, then every finite collection of distinct solutions and derivatives are algebraically independent over $F$. 
\par The (new) proof of the algebraic independence conjecture for $P_{II}(\alpha_0)$ has two main steps: (i) showing that any two solutions of the above Riccati equation are algebraically independent over $\mathbb{C}(t)$; and (ii) using geometric triviality (assuming the conjecture is false), the genericity of the parameter $\alpha_0$ and specialization, to obtain algebraic dependency between some pair of solutions of the Riccati equation, contradicting the result in the first step. As already mentioned, this strategy is very general. One needs to simply observed that all the Painlev\'e families have properties similar to the above: for generic parameters, the equations are strongly minimal and geometrically trivial, while for some special values of the parameters Riccati equations exist.
\par The paper is organized as follows. In Section 2 we prove an algebraic independence result for the Riccati equations. Namely, we show that any three distinct solutions of are algebraic independent over $\mathbb{C}(t)$, provided that there are no solutions in the algebraic closure of $\mathbb{C}(t)$. This answers a very natural question in the theory. Section 3 is where the main results is proved. We use the Riccati equations attached to the Painlev\'e families and the results in Section 2 to prove the algebraic independence conjecture for the generic $P_{III}$ and $P_{VI}$.


\section{An independence result for the Riccati equations}

We assume that the reader is familiar with the basics of differential algebraic geometry and the model theory of differentially closed fields as presented in, say, Marker \cite{Marker}. The paper \cite{NagPil}, which the current work is a continuation of, also contains a very good summary of the main notion used here.
 
We fix once and for all $\mathcal{U}$, a saturated model of $DCF_0$, the theory of differentially closed fields of characteristic $0$ with a single derivation in the language $L_{\partial}= (+,-,\cdot,0,1,\partial)$ of differential rings. We will assume that $\mathbb{C}$, the field of complex numbers, is the field of constants of $\mathcal{U}$ and that $t$ denote an element of $\mathcal{U}$ with the property that $\partial(t)=1$.

Recall that the Riccati equation (over $\mathbb{C}(t)$) is given by 
\[y'=ay^2+by+c,\] 
where $a,b,c\in\mathbb{C}(t)$ and $a\neq0$. Throughout, we denote the set it defines by $Ric(a,b,c)$ and we will assume that $Ric(a,b,c)$ has no elements in $\mathbb{C}(t)^{alg}$. This is of course the case for all the Riccati equations that appear in the study of the Painlev\'e families. Let us recall some of the well known properties of $Ric(a,b,c)$. The first two can easily be verified:
\begin{fct}
$Ric(a,b,c)$ is strongly minimal.
\end{fct}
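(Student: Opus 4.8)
The plan is to verify the two invariants that characterise strong minimality of an order one set: Morley rank $1$ and Morley degree $1$. Throughout let $F$ be any differential field with $\mathbb{C}(t)\subseteq F\subseteq\mathcal{U}$.

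First, the bound on the rank. Since $y'=ay^2+by+c$ expresses $y'$ as a rational function of $y$ over $\mathbb{C}(t)$, every higher derivative $y^{(n)}$ of a solution is a polynomial in $y$ over $F$, so for any $z\in Ric(a,b,c)$ one has $F\langle z\rangle=F(z)$ and hence $tr.deg_F F\langle z\rangle\le 1$. Thus $Ric(a,b,c)$ has Morley rank at most $1$. The standing assumption that $Ric(a,b,c)$ has no point in $\mathbb{C}(t)^{alg}$ guarantees that every solution is transcendental over $\mathbb{C}(t)$; together with the existence of solutions generic over $F$ in the saturated model $\mathcal{U}$, this shows the rank is exactly $1$ (and, incidentally, that there are no algebraic, i.e. rank $0$, solutions at all).

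The substantive point is Morley degree $1$, i.e. uniqueness of the generic type. I would establish this by showing that the differential ideal $I=[y'-ay^2-by-c]\subseteq F\{y\}$ is prime. Equip the polynomial ring $F[y]$ with the unique derivation $D$ extending $\partial$ on $F$ and satisfying $Dy=ay^2+by+c$; then $(F[y],D)$ is a differential integral domain, and the differential $F$-algebra homomorphism $F\{y\}\to(F[y],D)$ sending $y\mapsto y$ has kernel exactly $I$ (modulo $I$ every $y^{(n)}$ already reduces to a polynomial in $y$, and $I$ contains no nonzero element of $F[y]$ since solutions are transcendental). Hence $F\{y\}/I\cong F[y]$ is a domain, $I$ is prime, and $Ric(a,b,c)$ is an irreducible Kolchin closed set with a single generic type $p_F$ over $F$, the one with $tr.deg_F F\langle y\rangle=1$. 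As this argument works verbatim over every such $F$, the degree is $1$.

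Combining the two gives strong minimality: let $Y\subseteq Ric(a,b,c)$ be definable over $F$. Since $Y$ is $F$-definable, the unique generic type $p_F$ contains either the formula defining $Y$ or its negation; the corresponding one of $Y$, $Ric(a,b,c)\setminus Y$ is generic of Morley rank $1$, while the other has Morley rank $0$ and is therefore finite. Thus every definable subset of $Ric(a,b,c)$ is finite or cofinite. I expect the only step needing genuine (if brief) attention to be the primality of $I$, equivalently the irreducibility of the equation, since the transcendence-degree bound is immediate; the no-algebraic-solution hypothesis enters only to pin down that the rank is $1$ rather than $0$ and that $Ric(a,b,c)$ harbours no sporadic algebraic points.
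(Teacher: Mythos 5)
The paper offers no proof of this Fact at all --- it is stated as one of the two properties that ``can easily be verified'' --- so there is nothing to compare line by line; your proposal supplies the standard verification, and it is essentially correct. Your route (Morley rank at most $1$ because the equation has order one; Morley degree $1$ via primality of $I=[y'-ay^2-by-c]$, proved through the isomorphism $F\{y\}/I\cong F[y]$ where $F[y]$ carries the derivation $D$ with $Dy=ay^2+by+c$; then finite-or-cofinite for definable subsets) is exactly the classical argument, and its key mechanism --- that over each base $F$ there is a unique type of transcendence degree one, since any prime differential ideal properly containing $I$ must meet $F[y]$ nontrivially, forcing its solutions into $F^{alg}$ --- is the right one. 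Two points deserve tightening. First, your parenthetical justification that $I\cap F[y]=(0)$ ``since solutions are transcendental'' is circular, and over an arbitrary $F\supseteq\mathbb{C}(t)$ even false as a premise: the standing hypothesis only excludes solutions in $\mathbb{C}(t)^{alg}$, and there are differential fields $F$ (for instance $F=\mathbb{C}(t)\gen{y_1}$ for a solution $y_1$) that contain solutions outright. The correct and simpler reason is that your homomorphism $F\{y\}\to (F[y],D)$ restricts to the identity on $F[y]$, so its kernel meets $F[y]$ trivially; primality of $I$ is then unconditional. Second, your saturation-based steps (that an infinite $F$-definable set contains a point outside $F^{alg}$, and that generics over $F$ are realized in $\mathcal{U}$) are only licensed for parameter sets of cardinality less than $|\mathcal{U}|$, whereas in the paper's setup $\mathbb{C}(t)$ already has cardinality $2^{\aleph_0}=|\mathcal{U}|$; this is repaired by the standard remark that any definable subset of $Ric(a,b,c)$ is definable over a countable differential subfield containing $a$, $b$, $c$, $t$, over which your argument runs verbatim. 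With these repairs your proof actually establishes slightly more than the Fact as stated: strong minimality holds with no hypothesis on algebraic solutions whatsoever --- the assumption that $Ric(a,b,c)$ has no points in $\mathbb{C}(t)^{alg}$ is what drives Lemma \ref{Mainlem} and Proposition \ref{Mainprop}, not this Fact --- which is consistent with your closing observation about where that hypothesis enters.
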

\begin{fct}\label{RicSimple}
The map $y\mapsto ay$ is a definable bijection between $Ric(a,b,c)$ and $Ric(1,r,s)$ where $r=b+\frac{a'}{a}$ and $s=ac$. So in particular $Ric(1,r,s)$ has no solutions in $\mathbb{C}(t)^{alg}$.
\end{fct}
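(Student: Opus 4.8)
The plan is to prove the statement by a direct change of variables, verifying that $z = ay$ satisfies a monic Riccati equation and then transferring both the bijectivity and the no-algebraic-solution hypothesis along this substitution. First I would set $z = ay$, which is legitimate and invertible since $a \in \mathbb{C}(t)$ is nonzero, so that $y = z/a$. Differentiating by the quotient rule gives $y' = (z'a - za')/a^2$. Substituting this expression for $y'$, together with $y = z/a$, into $y' = ay^2 + by + c$ and then clearing the denominator should reduce everything to a Riccati equation in $z$ with leading coefficient $1$.

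Carrying out the algebra, multiplying $y' = ay^2 + by + c$ through by $a^2$ turns it into $z'a - za' = az^2 + abz + a^2 c$; dividing by $a$ and collecting the coefficient of $z$ yields
\[ z' = z^2 + \left( b + \frac{a'}{a} \right) z + ac, \]
which is exactly $Ric(1,r,s)$ with $r = b + a'/a$ and $s = ac$. Thus $y \mapsto ay$ maps $Ric(a,b,c)$ into $Ric(1,r,s)$. Since both $a$ and $1/a$ lie in $\mathbb{C}(t)$, the map and its inverse $z \mapsto z/a$ are given by rational expressions over $\mathbb{C}(t)$, so this is a definable bijection between the two sets.

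For the final clause I would note that $\mathbb{C}(t)^{alg}$ is a field containing $a$ and $a^{-1}$, so the bijection $y \mapsto ay$ restricts to a bijection between the $\mathbb{C}(t)^{alg}$-points of $Ric(a,b,c)$ and those of $Ric(1,r,s)$; hence emptiness of the former, which is our standing assumption, forces emptiness of the latter. There is no real obstacle in this verification: the only point that requires a moment's care is the derivative of the quotient $z/a$, which is precisely what introduces the extra term $a'/a$ into the linear coefficient $r$. Everything else is routine computation, and no model theory beyond the observation that the substitution is rational over $\mathbb{C}(t)$ is needed.
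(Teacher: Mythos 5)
Your computation is correct and is exactly the routine verification the paper has in mind, since the paper states this Fact without proof, remarking only that it ``can easily be verified.'' Both the change of variables $z=ay$ yielding $z'=z^2+\left(b+\frac{a'}{a}\right)z+ac$ and the transfer of the no-solutions-in-$\mathbb{C}(t)^{alg}$ hypothesis along the $\mathbb{C}(t)$-rational bijection are handled properly.
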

Unlike the generic Painlev\'e equations, $Ric(a,b,c)$ is not geometrically trivial. It is well known that for any $y_1,y_2,y_3,y_4\in Ric(a,b,c)$, one has that $tr.deg_{\mathbb{C}(t)}\mathbb{C}(t)(y_1,y_2,y_3,y_4)<4$. This is explained by the existence of a superposition law; namely, given three distinct elements $y_1,y_2,y_3\in Ric(a,b,c)$, then for any other $y\in Ric(a,b,c)$, there is $\alpha\in\mathbb{C}$ such that 
\[y=\frac{y_2(y_3-y_1)+\alpha y_1(y_2-y_3)}{\alpha(y_3-y_1)+\alpha(y_2-y_3)}.\] This is equivalent to the fact that the cross ratio of $y_1$, $y_2$, $y_3$ and $y_4$ is constant. On the other hand, one has the following natural question: given distinct elements $y_1,y_2,y_3\in Ric(a,b,c)$, is $tr.deg_{\mathbb{C}(t)}\mathbb{C}(t)(y_1,y_2,y_3)=3$? We will show that this is indeed the case. First though, we prove that any two distinct solutions are algebraically independent over $\mathbb{C}(t)$. As mentioned in the introduction, we will use this in the next section to prove the algebraic independence conjecture for the Painlev\'e equations. Moreover, this is also needed to prove the general transcendence statement for $y_1,y_2,y_3\in Ric(a,b,c)$.\\

Before we proceed recall that for a field $L$,  $L\left(\left(X\right)\right)$ denotes the field of formal Laurent series in variable $X$, while $L\gen{\gen{X}}$ denotes the field of formal Puiseux series, i.e. the field $\bigcup_{d\in{\mathbb{N}}}L\left(\left(X^{1/d}\right)\right)$. It is well know that if $L$ is an algebraically closed field of characteristic zero, then so is $L\gen{\gen{X}}$ (cf. \cite{Eisenbud}). 

\begin{prop}\label{Mainprop}
Let $y_1,y_2\in Ric(a,b,c)$ be distinct. Then $tr.deg_{\mathbb{C}(t)}\mathbb{C}(t)(y_1,y_2)=2$, that is $y_1$ and $y_1$ are algebraically independent over $\mathbb{C}(t)$.
\end{prop}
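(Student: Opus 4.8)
The plan is to reduce everything to a linear first-order equation and then kill the possibility of an algebraic solution by a place-by-place analysis. First I would normalise using Fact \ref{RicSimple}: the map $y\mapsto ay$ is a bijection defined over $\mathbb{C}(t)$, so it preserves the transcendence degree of $\mathbb{C}(t)(y_1,y_2)$, and I may assume $a=1$, i.e. $y_i'=y_i^2+ry_i+s$ with $Ric(1,r,s)$ having no solution in $\mathbb{C}(t)^{alg}$. Since $y_1\notin\mathbb{C}(t)^{alg}$ and $\mathbb{C}(t)^{alg}$ is algebraically closed, $y_1$ is transcendental over $F:=\mathbb{C}(t)^{alg}$, so $K:=F(y_1)$ is a rational function field and $tr.deg_{\mathbb{C}(t)}\mathbb{C}(t)(y_1,y_2)\geq 1$. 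The whole task is therefore to rule out the case $tr.deg=1$, in which $y_2$ is algebraic over $K$. The one computation I need is $(y_2-y_1)'=(y_1+y_2+r)(y_2-y_1)$, which is immediate from subtracting the two Riccati relations.

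Next I would linearise. Assuming $tr.deg=1$, set $u:=1/(y_2-y_1)$, which is well defined (as $y_1\neq y_2$) and algebraic over $K$. Substituting $y_2=y_1+u^{-1}$ into the identity above gives the inhomogeneous linear equation
\[
u'+(2y_1+r)\,u=-1,
\]
with coefficients $2y_1+r,\,-1\in K$. At this point there are two natural ways to finish, and I would use whichever reads more cleanly. Either expand the algebraic element $u$ as a Puiseux series in $y_1^{-1}$ over the algebraically closed field $F$ (legitimate since $F\gen{\gen{y_1^{-1}}}$ is algebraically closed and the derivation acts on it term by term via $\partial y_1=y_1^2+ry_1+s$) and compare leading coefficients; or run a trace/averaging argument. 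For the latter: the derivation extends uniquely to the normal closure of $K(u)/K$ and hence commutes with every $K$-automorphism, so each conjugate of $u$ satisfies the same linear equation; averaging the conjugates produces an honest solution $u_0\in K=F(y_1)$. Thus it suffices to show the equation has \emph{no} solution rational in $y_1$.

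To rule out $u_0\in F(y_1)$ I would analyse poles. The key use of the hypothesis is that for every $\alpha\in F$ the constant $\alpha$ is not a solution, i.e. $\alpha^2+r\alpha+s-\alpha'\neq 0$; this says exactly that $\partial(y_1-\alpha)$ is a unit at the place $y_1=\alpha$, so the derivation raises the pole order at each finite place by exactly one: $v_\alpha(f')=v_\alpha(f)-1$ whenever $v_\alpha(f)<0$. If $u_0$ had a pole of order $m$ at a finite $\alpha$, then $u_0'$ would have valuation $-m-1$ there while $(2y_1+r)u_0$ and the constant $-1$ have valuation $\geq -m$ and $0$; the $u_0'$ term would then strictly dominate and could not cancel in $u_0'+(2y_1+r)u_0+1=0$, a contradiction. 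Hence $u_0\in F[y_1]$ is a polynomial, and a look at the place at infinity finishes it: if $u_0=\sum_{i=0}^n a_iy_1^i$ with $a_n\neq0$, the top-degree term of $u_0'+(2y_1+r)u_0$ is $(n+2)a_n\,y_1^{n+1}$, which cannot equal the constant $-1$. This contradiction shows no algebraic $u$ exists, so $tr.deg=2$.

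I expect the main obstacle to be the pole/leading-term bookkeeping in the last step — in particular isolating the borderline exponents cleanly, whichever finishing method is used. The conceptual crux, and the point where the standing assumption that $Ric(a,b,c)$ has no solution in $\mathbb{C}(t)^{alg}$ is genuinely consumed, is the translation of that hypothesis into the statement that the derivation $\partial$ is regular (raises pole order by exactly one) at every finite place of $F(y_1)$; everything downstream is a finite valuation computation.
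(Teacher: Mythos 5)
Your proof is correct, but it takes a genuinely different route from the paper's. The paper argues in two stages: first a ramification lemma (Lemma \ref{Mainlem}), proved by expanding $y_2$ as a Puiseux series at each place $\beta$ of $\mathbb{C}(t)^{alg}(y_1)$ and comparing coefficients directly in the nonlinear Riccati relation to show the ramification exponent is $1$ everywhere, whence the Hurwitz genus formula gives $y_2\in\mathbb{C}(t)^{alg}(y_1)$; then a second, model-theoretic stage in the saturated model $\mathcal{U}$, where strong minimality of $Ric(1,r,s)$ propagates the relation $y_2=p(y_1)/q(y_1)$ to fresh solutions $y_3,y_4$, and the constancy of the cross ratio (applied twice) forces $y_1\in\mathbb{C}(t)^{alg}$, a contradiction. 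You bypass both stages: the substitution $u=1/(y_2-y_1)$ converts the problem into the inhomogeneous linear equation $u'+(2y_1+r)u=-1$ over $K=F(y_1)$; averaging the conjugates of $u$ descends from an algebraic to a rational solution $u_0\in F(y_1)$ (legitimate, as you say, because the derivation extends uniquely to the normal closure, hence commutes with the Galois action, and the equation is affine over $K$); and your valuation bookkeeping — in which the standing hypothesis enters as $\alpha^2+r\alpha+s-\alpha'\neq 0$, which is precisely the paper's quantity $\gamma\neq 0$ from Lemma \ref{Mainlem} — rules out finite poles, after which the leading-term count at infinity, $(n+2)a_n\neq 0$ in characteristic zero, finishes. (Your fully elaborated trace route is watertight; the Puiseux-at-infinity alternative you sketch would need the same care about extending the derivation that the paper takes, so it is good that you committed to the former.) Your argument is purely differential-algebraic: no saturation, no strong minimality, no superposition law, no genus formula — more elementary and self-contained than the paper's. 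Moreover, since your pole analysis only needs $F$ to be an algebraically closed differential field containing $r,s$ and no solution of the Riccati equation, it relativizes (e.g.\ over $\mathbb{C}(t)(y_0)^{alg}$) exactly as Lemma \ref{Mainlem} does, so it could equally serve as input to the three-solution result, Proposition \ref{Mainquestion}. What the paper's heavier route buys is the ramification lemma as a reusable tool and the strong-minimality-plus-definability template that recurs in the Painlev\'e arguments of Section 3.
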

\begin{lem}\label{Mainlem}
Let $r,s\in\mathbb{C}(t)$ and suppose that $K$ is a differential extension of $\mathbb{C}(t)$ such that $Ric(1,r,s)$ has no solutions in $K^{alg}$. Let $y,z\in Ric(1,r,s)$ be distinct. If $z\in K(y)^{alg}$, then $z\in K^{alg}(y)$.
 \end{lem}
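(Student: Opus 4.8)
The plan is to reduce the statement to a question about a first-order \emph{linear} differential equation, where ramification can be controlled directly, rather than analysing the nonlinear Riccati equation itself. First I would observe that $y\notin K^{alg}$, since otherwise $y$ would be a solution of $Ric(1,r,s)$ in $K^{alg}$; hence $y$ is transcendental over $K$ and $F:=K^{alg}(y)$ is a rational function field in one variable over the algebraically closed field $K^{alg}$. Because $K^{alg}(y)$ is algebraic over $K(y)$, the algebraic closures agree, $K(y)^{alg}=F^{alg}$, so the hypothesis reads $z\in F^{alg}$ and the goal is exactly $z\in F$. Set $v=1/(z-y)\in F^{alg}$ (well defined since $y\neq z$). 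Using $z'=z^2+rz+s$ and $y'=y^2+ry+s$ one gets $(z-y)'=(z-y)(z+y+r)$, and therefore
\[ v'=-(2y+r)\,v-1. \]
Thus it suffices to prove that every solution $v\in F^{alg}$ of this inhomogeneous linear equation already lies in $F$.

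Next I would argue by contradiction. Suppose $v\notin F$. Since $\mathrm{char}\,F=0$, $F$ is the fixed field of $\mathrm{Aut}(F^{alg}/F)$, so there is $\sigma$ with $\sigma(v)\neq v$. The derivation extends uniquely to $F^{alg}$, so every $F$-automorphism commutes with it; applying $\sigma$ to the equation for $v$ shows $\sigma(v)$ satisfies the same inhomogeneous equation, whence $\xi:=v-\sigma(v)$ is a nonzero element of $F^{alg}$ with $\xi'=-(2y+r)\,\xi$. The same commutation argument shows all conjugates of $\xi$ over $F$ satisfy this \emph{homogeneous} equation, and the logarithmic derivative of the ratio of any two conjugates vanishes, so the conjugates of $\xi$ are constant multiples of one another (the constants lie in $\mathbb{C}\subseteq F$, the constant field of $\mathcal U$). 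Taking the norm $N_{F(\xi)/F}(\xi)\in F$ then yields $\xi^{d}\in F$ for $d=[F(\xi):F]$, i.e. a nonzero $g\in F^{*}$ with $g'/g=-d(2y+r)$.

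It remains to rule out such a $g$, and this is the heart of the matter and the only place the hypothesis is used. I would factor $g=\kappa\prod_i (y-\beta_i)^{m_i}$ with $\kappa,\beta_i\in K^{alg}$, $m_i\in\mathbb Z$, the $\beta_i$ distinct. Using $y'=y^2+ry+s$ and the identity $y^2+ry+s-\beta_i'=(y-\beta_i)(y+\beta_i+r)+\gamma_i$ with $\gamma_i:=\beta_i^2+r\beta_i+s-\beta_i'$, one computes
\[ \frac{g'}{g}=\frac{\kappa'}{\kappa}+\sum_i m_i\,(y+\beta_i+r)+\sum_i \frac{m_i\,\gamma_i}{\,y-\beta_i\,}. \]
The crucial point is that $\gamma_i\neq 0$: if $\gamma_i=0$ then $\beta_i\in K^{alg}$ would solve $Ric(1,r,s)$, contrary to hypothesis. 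Comparing with $g'/g=-d(2y+r)$, which has no finite poles, forces $m_i\gamma_i=0$ and hence $m_i=0$ for all $i$; but then $g'/g=\kappa'/\kappa\in K^{alg}$ has no term in $y$, while $-d(2y+r)$ has leading term $-2dy\neq0$. This contradiction gives $v\in F$, so $z=y+1/v\in K^{alg}(y)$, as required.

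The step I expect to be the main obstacle is precisely this final computation, together with the choice that makes it work: the nonlinear Riccati equation resists a direct place-by-place analysis because its conjugate solutions may have poles of differing orders, so I first pass to the linear equation for $v=1/(z-y)$, whose logarithmic-derivative residues are governed by the single scalars $\gamma_i$ — nonzero exactly because $Ric(1,r,s)$ has no algebraic solutions over $K$. An alternative, in the spirit of the Puiseux series recalled above, would embed $F^{alg}$ into $K^{alg}\gen{\gen{1/y}}$ and check unramifiedness at every place directly; the logarithmic-derivative computation simply packages that local data globally.
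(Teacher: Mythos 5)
Your proof is correct, but it takes a genuinely different route from the paper. The paper works directly with the nonlinear equation: it expands $z$ as a Puiseux series in a local parameter $\tau$ at each $\beta\in K^{alg}$ (with $y=\beta+\tau^e$), and by a careful comparison of Laurent coefficients in $z'=z^2+rz+s$ (two separate claims, splitting on the sign of the order of $z$ at $\beta$) shows the ramification exponent $e$ is $1$ at every finite place, then invokes the Hurwitz genus formula to conclude $[K^{alg}(y,z):K^{alg}(y)]=1$. You instead linearize: passing to $v=1/(z-y)$ turns the problem into showing that every algebraic solution of the inhomogeneous linear equation $v'=-(2y+r)v-1$ over $F=K^{alg}(y)$ lies in $F$; a Galois-theoretic descent (automorphisms commute with the unique extension of the derivation, differences of conjugates solve the homogeneous equation, ratios of conjugates are constants in $\mathbb{C}\subseteq F$, then the norm) produces $g\in F^{*}$ with $g'/g=-d(2y+r)$, which you kill by a partial-fraction residue computation. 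Both arguments hinge on exactly the same quantity: the nonvanishing of $\gamma_i=\beta_i^2+r\beta_i+s-\beta_i'$, which is where the hypothesis that $Ric(1,r,s)$ has no solutions in $K^{alg}$ enters in each proof. What your route buys: it replaces the paper's delicate coefficient bookkeeping and the appeal to Hurwitz by a transparent global residue identity, and in particular it handles the place at infinity automatically — a point the paper's argument, which only verifies $e=1$ at finite $\beta\in L$ before citing Hurwitz, passes over in silence even though Hurwitz requires unramifiedness there as well. What the paper's route buys is self-containedness: it needs only formal series manipulation and one classical theorem, whereas you use the uniqueness of derivation extensions, the fixed-field characterization of $F$, and the identification of the constants of $F^{alg}$ with $\mathbb{C}$ (valid here because everything sits inside $\mathcal{U}$, whose constant field is $\mathbb{C}$ — a hypothesis worth making explicit, since for an abstract differential extension $K$ the constants of $F^{alg}$ need only be algebraic over those of $K$, though your norm argument would survive that weakening).
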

\begin{proof}
Let $y$ and $z$ be two distinct elements of $ Ric(1,r,s)$ and suppose $z\in K(y)^{alg}$. We write $L=K^{alg}$. Now since $z\in L(y)^{alg}$, we can look at expansions in a local parameter $\tau$ at $\beta\in F$ given by
\[y=\beta+\tau^e\;\;\;\;\;\;\;\;\;\;\;\;\;z=\sum_{i=r}^{\infty}a_i\tau^i\;\;\;\;(a_r\neq0)\]
with $e$ the ramification exponent. In other words, for any $\beta\in L$, $z$ can be seen as an element of $L\gen{\gen{y-\beta}}$ and so there is $e\in\mathbb{N}$ such that $z\in L\left(\left((y-\beta)^{1/e}\right)\right)$. All we have to show is that for every choice of $\beta\in L$, the ramification exponent is $1$ ($e=1$). Using the genus formula of Hurwitz (cf. \cite{Lang} Theorem 6.1), it then follows that the degree of the extension $L(y,z)$ over $L(y)$  is $1$ and we are done.
\par Differentiating we have
\begin{eqnarray}
 \notag e\tau^{e-1}\tau' &=& y'-\beta'\\ \notag
	&=& y^2+ry+s-\beta'\\ \notag 
	&=& (\beta+\tau^e)^2+r(\beta+\tau^e)+s-\beta'\\
	&=& \beta^2+r\beta+s-\beta'+(2\beta+r)\tau^e+\tau^{2e}\label{eqn0}
\end{eqnarray}
Letting $\gamma=\beta^2+r\beta+s-\beta'$, we have that $\gamma\neq0$, since $Ric(1,r,s)$ has no element in $L=K^{alg}$. Hence from equation \ref{eqn0}, $\gamma\neq0$ implies that $\tau'=e^{-1}\gamma\tau^{1-e}+A$ where 
\[ 
A=e^{-1}\tau^{1-e}\left((2\beta+r)\tau^e+\tau^{2e}\right)
\]
and from this we get that
\begin{eqnarray*}
(\tau^i)'&=& i\tau^{i-1}\tau'\\
	&=& i\tau^{i-1}e^{-1}\gamma\tau^{1-e}+A_1\\
	&=& \frac{i\gamma}{e}\tau^{i-e}+A_1
\end{eqnarray*}
Now using the equations
\[z'=z^2+rz+s\;\;\;\;\;\;\;\;\text{and}\;\;\;\;\;z=\sum_{i=r}^{\infty}a_i\tau^i\]
we have 
\begin{equation}\label{main}
\sum_{i=r}^{\infty}a_i \frac{i\gamma}{e}\tau^{i-e}+\cdots=\sum_{i,j}a_ia_j\tau^{i+j}+\cdots.
\end{equation}
Using this we prove a couple of claims. One should note that in the calculations below, when using \ref{main}, we will carefully choose powers of $\tau$ in such a way that when comparing coefficients on both sides of the equation, only those shown above will play a role.\\
{\bf Claim 1:} If $e>1$ then $r<0$.\\
{\em Proof:} Let $e>1$ and assume $r\geq0$. If for all $l\in\{r,r+1,\ldots\}$ $e\mid l$, then it is not hard to see that the ramification exponent must be one (i.e $e=1$), a contradiction. So choose $l\in\{r,r+1,\ldots\}$ least such that $e\nmid l$ and $a_l\neq0$. First, one should note that since $l-e<l$ and $e\nmid l-e$, $a_{l-e}=0$, so that in what follows, as we will look at the coefficient of $\tau^{l-e}$, one does not need to worry about the other coefficients in \ref{main} (especially the linear part of $z^2+rz+s$).
\par The coefficient of $\tau^{l-e}$ on the LHS of \ref{main} is
\[a_l\frac{l\gamma}{e}\neq0.\]
This implies that the coefficient on the RHS of $\tau^{i+j}$ for some $i,j\geq r$ with $i+j=l-e$ must be non-zero. However for any such, since $i+j<l$ and $e\nmid i+j$, we have that $e$ does not divide at least one them, say $i<l$. But then $a_i=0$ (as $l$ was chosen to be the least with this property) and so $a_ia_j=0$, a contradiction.\\
{\bf Claim 2:} The case $e>1$ and $r<0$ leads to a contradiction.\\
{\em Proof:} So this time suppose $e>1$ and  $r<0$. From the least powers of $\tau$ in $\ref{main}$ we have $r-e=2r$, that is $r=-e$, and from the coefficients of $\tau^{r-e}$ we get
\[a_r\frac{r\gamma}{e}=a_r^2\]
so that $\gamma=-a_r$, since $a_r\neq0$.\\
So again choose $l\in\{r,r+1,\ldots\}$ least such that $e\nmid l$ and $a_l\neq0$. The coefficient of $\tau^{l-e}=\tau^{l+r}$ on the LHS of \ref{main} is 
\[a_l\frac{l\gamma}{e}\neq0.\]
On the RHS we see that the coefficient of $\tau^{l-e}=\tau^{l+r}$ should be $2a_ra_l$. (Indeed, $e\nmid i+j$ means that $e$ does not divide at least one of them, say $i$. Then $e\nmid i$, $a_i\neq0$ means either $i=l$ or $i>l$. But $i>l$ and $i+j=r+l$ implies that $j<r$ a contradiction. So $i=l$ and hence $j=r$).\\
Hence
\[a_l\frac{l\gamma}{e}=2a_ra_l=-2\gamma a_l\]
and we see that $l=-2e$, contradicting $e\nmid l$ and we are done\\

\par Hence $e=1$, and since $\beta$ was arbitrary, the ramification exponent at every $\beta\in L$ is 1. So $z\in L(y)$.
\end{proof}
\noindent{\em Proof of Proposition \ref{Mainprop}} Using Fact \ref{RicSimple}, it suffices to prove the result for $Ric(1,r,s)$ with $r,s\in \mathbb{C}(t)$. Of course, in our case we have $r=b+\frac{a'}{a}$ and $s=ac$. So let $y_1,y_2\in Ric(1,r,s)$ be distinct and suppose $tr.deg_{\mathbb{C}(t)}\mathbb{C}(t)(y_1,y_2)=1$. Then $y_2\in \mathbb{C}(t)(y_1)^{alg}$ and so by Lemma \ref{Mainlem} we have $y_2\in \mathbb{C}(t)^{alg}(y_1)$, that is \[y_2=\frac{p(y_1)}{q(y_1)}\] for some $p,q\in \mathbb{C}(t)^{alg}[u]$.
\par The $L_{\mathbb{C}(t)^{alg}}$-formula $\exists u\left(u=\frac{p(y_1)}{q(y_1)}\right)$ is in $tp(y_1/\mathbb{C}(t)^{alg})$ and since $Ric(1,r,s)$ is strongly minimal and has not solutions in $\mathbb{C}(t)^{alg}$, we have that $\forall v\exists u\left(u=\frac{p(v)}{q(v)}\right)$ is true in $\mathcal{U}$. Here we take $u$ and $v$ to range over elements of $Ric(1,r,s)$. So let $y_3,y_4\in Ric(1,r,s)$ distinct from $y_1$ and $y_2$ be such that \[y_4=\frac{p(y_3)}{q(y_3)}\].
But the cross ratio of $y_1$, $y_2$, $y_3$ and $y_4$ is constant. So for some $\alpha\in\mathbb{C}$
\begin{eqnarray*}
\alpha &=& \frac{(y_4-y_2)(y_3-y_1)}{(y_4-y_1)(y_3-y_2)}\\
           &=& \frac{(\frac{p(y_3)}{q(y_3)}-\frac{p(y_1)}{q(y_1)})(y_3-y_1)}{(\frac{p(y_3)}{q(y_3)}-y_1)(y_3-\frac{p(y_1)}{q(y_1)})}\\
           &=& \frac{\left(p(y_3)q(y_1)-p(y_1)q(y_3)\right)\left(y_3-y_1\right)}{\left(p(y_3)-y_1q(y_3)\right)\left(y_3q(y_1)-p(y_1)\right)}.
\end{eqnarray*}
By clearing denominators we get \[\alpha\left(p(y_3)-y_1q(y_3)\right)\left(y_3q(y_1)-p(y_1)\right)-\left(p(y_3)q(y_1)-p(y_1)q(y_3)\right)\left(y_3-y_1\right)=0.\]
The polynomial $F(u,v)=\alpha\left(p(u)-vq(u)\right)\left(uq(v)-p(v)\right)-\left(p(u)q(v)-p(v)q(u)\right)\left(u-v\right)$ is not identically zero (in $\mathbb{C}(t)^{alg}[u,v]$)  since otherwise it would mean that the cross ratio of any four solutions $u,v,\frac{p(u)}{q(u)},\frac{p(v)}{q(v)}$ equals the same constant $\alpha$. So we have that $y_3\in \mathbb{C}(t)(y_1)^{alg}$ and by Lemma \ref{Mainlem} $y_3\in \mathbb{C}(t)^{alg}(y_1)$.
\par Consequently, $y_2,y_3,y_4\in \mathbb{C}(t)^{alg}(y_1)$.  Say $y_i=f_i(y_1)$ for $i=2,3,4$, where $f_i\in \mathbb{C}(t)^{alg}(u)$. We apply the cross ratio one more time. But first, notice that $\exists u_2, u_3,u_4\bigwedge_i\left(u_i=f_i(y_1)\right)$ is in $tp(y_1/\mathbb{C}(t)^{alg})$ and so as before we have that $\forall u_1\exists u_2, u_3,u_4\bigwedge_i\left(u_i=f_i(u_1)\right)$ is true in $\mathcal{U}$. So
\begin{eqnarray*}
\alpha &=& \frac{(y_4-y_2)(y_3-y_1)}{(y_4-y_1)(y_3-y_2)}\\
           &=& \frac{(f_4(y_1)-f_2(y_1))(f_3(y_1)-y_1)}{(f_3(y_1)-y_1)(f_3(y_1)-f_2(y_1))}
\end{eqnarray*}
By clearing denominators we obtain this time a polynomial $F\in \mathbb{C}(t)^{alg}[u]$ not identically zero (since again any four solutions $u,f_2(u),f_3(u),f_4(u)$ cannot have the same cross ratio $\alpha$) and such that $F(y_1)=0$. In other words $y_1\in \mathbb{C}(t)^{alg}$ a contradiction.
\qed

Using Lemma \ref{Mainlem} and the same reasoning as in the proof of Proposition \ref{Mainprop}, we get desired result for three distinct elements of $Ric(a,b,c)$.

\begin{prop}\label{Mainquestion}
Let $y_0,y_1,y_1\in Ric(a,b,c)$ be distinct. Then $tr.deg_{\mathbb{C}(t)}\mathbb{C}(t)(y_0,y_1,y_2)=3$, that is $y_0$, $y_1$ and $y_2$ are algebraically independent over $\mathbb{C}(t)$.
\end{prop}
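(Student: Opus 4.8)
The plan is to argue by contradiction, following the skeleton of the proof of Proposition~\ref{Mainprop} but replacing the direct appeal to Lemma~\ref{Mainlem} (which breaks down here) by a linearisation together with a logarithmic-derivative computation. By Fact~\ref{RicSimple} the map $y\mapsto ay$ is a $\mathbb{C}(t)$-definable bijection $Ric(a,b,c)\to Ric(1,r,s)$, and since $a\in\mathbb{C}(t)$ we have $\mathbb{C}(t)(ay_0,ay_1,ay_2)=\mathbb{C}(t)(y_0,y_1,y_2)$, so it suffices to treat $Ric(1,r,s)$. Suppose then that $y_0,y_1,y_2\in Ric(1,r,s)$ are distinct with $tr.deg_{\mathbb{C}(t)}\mathbb{C}(t)(y_0,y_1,y_2)<3$. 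By Proposition~\ref{Mainprop} any two of them are independent, so the transcendence degree is exactly $2$ and $y_2\in F^{alg}$, where $F=\mathbb{C}(t)(y_0,y_1)$. The naive move would be to apply Lemma~\ref{Mainlem} with $K=\mathbb{C}(t)(y_0)$, $y=y_1$, $z=y_2$; but this is obstructed, because $y_0$ is itself a solution of $Ric(1,r,s)$ lying in $K^{alg}$, which is exactly the situation the Lemma forbids. This single ``bad'' solution is the crux of the problem.

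To get around it I would linearise around $y_0$. Set $E=\mathbb{C}(t)(y_0)$ and $w_i=1/(y_i-y_0)$ for $i=1,2$. Substituting $y=y_0+1/w$ into the Riccati equation shows that each $w_i$ satisfies the linear inhomogeneous equation $w'=-(2y_0+r)w-1$ over $E$, so their difference $u:=w_1-w_2$ satisfies the corresponding homogeneous equation, i.e. $u'/u=-(2y_0+r)\in E$. Moreover $u\neq 0$ (as $y_1\neq y_2$), and since $w_1\in F$ while $w_2\in F^{alg}$ we have $u\in F^{alg}=E(y_1)^{alg}$, with $y_1$ transcendental over $E$ by Proposition~\ref{Mainprop}. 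Thus the anomalous Riccati solution has been traded for an algebraic solution of a homogeneous linear equation.

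The finish is a two-tier descent. Let $n=[E(y_1,u):E(y_1)]$ and $P=N_{E(y_1,u)/E(y_1)}(u)\in E(y_1)$; since every $E(y_1)$-conjugate of $u$ has the same logarithmic derivative $-(2y_0+r)$, we obtain $P'/P=-n(2y_0+r)\in E$. Factoring $P=c\prod_i(y_1-a_i)^{m_i}$ over $E^{alg}$ and computing $P'/P$ using $y_1'=y_1^2+ry_1+s$, each finite place contributes a simple pole $m_i\rho(a_i)/(y_1-a_i)$ with $\rho(a)=a^2+ra+s-a'$, together with a polynomial part whose linear term is $(\sum_i m_i)y_1$. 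Since $P'/P\in E$ has no pole in $y_1$ and no $y_1$-term, we need $m_i\rho(a_i)=0$ for all $i$ and $\sum_i m_i=0$; but $\rho(a_i)=0$ exactly when $a_i$ is a $Ric(1,r,s)$-solution in $E^{alg}$, and by Proposition~\ref{Mainprop} the only such solution is $y_0$, so $P=c\in E$. Repeating the computation one level down for $c\in\mathbb{C}(t)(y_0)$ with $c'/c=-n(2y_0+r)$, and factoring over $\mathbb{C}(t)^{alg}$, the standing hypothesis that $Ric(1,r,s)$ has \emph{no} solutions in $\mathbb{C}(t)^{alg}$ forces every exponent to vanish, whence $c\in\mathbb{C}(t)$ and $c'/c\in\mathbb{C}(t)$. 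This is impossible, since $-n(2y_0+r)$ genuinely involves $y_0\notin\mathbb{C}(t)$ whenever $n\geq 1$, giving the desired contradiction.

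I expect the main obstacle to be exactly the passage around the exceptional solution $y_0\in\mathbb{C}(t)(y_0)^{alg}$: it is what prevents a verbatim reuse of Lemma~\ref{Mainlem}, and the entire role of the linearisation is to convert that one anomalous Riccati solution into a homogeneous linear equation, where the norm and logarithmic-derivative bookkeeping isolate its contribution cleanly. The points needing care are checking that conjugation over $E(y_1)$ commutes with the (unique) extension of the derivation, which is automatic in characteristic $0$, so that the norm really has logarithmic derivative $-n(2y_0+r)$; and verifying that the two tiers of the factorisation invoke the correct non-vanishing of $\rho$, namely Proposition~\ref{Mainprop} at the upper tier (to pin down $y_0$ as the only solution in $E^{alg}$) and the standing hypothesis at the lower tier.
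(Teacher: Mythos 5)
Your proposal is correct, but it takes a genuinely different route from the paper's. The paper handles this proposition by applying Lemma~\ref{Mainlem} a second time, over the base $K=\mathbb{C}(t)(y_0)$, to write $y_2=f(y_1)$ with $f\in\mathbb{C}(t)(y_0)^{alg}(u)$, then uses strong minimality to make the relation $v=f(u)$ hold of all solutions, and finally substitutes two superposition-law solutions $y_3=P(\beta,y_0,y_1,f(y_1))$ and $y_4=P(\gamma,y_0,y_1,f(y_1))$ into the constant cross-ratio to manufacture a nontrivial algebraic relation between $y_0$ and $y_1$ over $\mathbb{C}(t)^{alg}$, contradicting Proposition~\ref{Mainprop}. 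You replace all of this by the linearisation $w=1/(y-y_0)$, the homogeneous equation $u'=-(2y_0+r)u$ for $u=w_1-w_2$, and the norm-plus-logarithmic-derivative descent; I checked the computation of $\rho$, the commutation of conjugation with the derivation, and both matching steps (note that at the upper tier the conclusion $P=c$ needs the pole condition \emph{and} $\sum_i m_i=0$ together, to kill the one permitted factor $y_1-y_0$ --- you state both, so this is fine). Your stated reason for not reusing Lemma~\ref{Mainlem} verbatim is moreover well taken, and in fact touches a soft spot in the paper's own argument: the paper asserts that $Ric(1,r,s)$ has no solutions in $\mathbb{C}(t)(y_0)^{alg}$, which is false as literally stated since $y_0$ itself is one, and the lemma's proof needs $\gamma=\beta^2+r\beta+s-\beta'\neq0$ at \emph{every} $\beta$, which fails at $\beta=y_0$; once ramification is conceivable over both $y_1=y_0$ and infinity, the Hurwitz count no longer forces degree one, so the paper's application of the lemma requires additional analysis there. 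Thus your argument buys an elementary, self-contained proof that is robust precisely where the paper is cavalier, at the price of exploiting the special M\"obius/linear structure of the Riccati equation; the paper's route, where it works, has the virtue of running parallel to the proof of Proposition~\ref{Mainprop} and of displaying the definability and superposition machinery used throughout the rest of the paper.
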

\begin{proof}
As before, it suffices to prove the result for $Ric(1,r,s)$ with $r,s\in \mathbb{C}(t)$ (see Fact \ref{RicSimple}). So let $y_0,y_1,y_1\in Ric(1,r,s)$ be distinct and suppose $tr.deg_{\mathbb{C}(t)}\mathbb{C}(t)(y_0,y_1,y_2)<3$. By Proposition \ref{Mainprop} we must have that $tr.deg_{\mathbb{C}(t)}\mathbb{C}(t)(y_0,y_1,y_2)=2$ and so without lost of generality we assume $y_2\in \mathbb{C}(t)(y_0,y_1)^{alg}$. 
\par Notice though that by Proposition \ref{Mainprop}, we have that $ Ric(1,r,s)$ has no solutions in $\mathbb{C}(t)(y_0)^{alg}$. We think of $y_3$ as being in $\mathbb{C}(t)(y_0)(y_1)^{alg}$ and so can apply Lemma \ref{Mainlem} to get that have $y_2\in \mathbb{C}(t)(y_0)^{alg}(y_1)$. Say $y_2=f(y_1)$, where $f\in\mathbb{C}(t)(y_0)^{alg}(u)$. Note that since $Ric(1,r,s)$ is strongly minimal and has no solution in $\mathbb{C}(t)(y_0)^{alg}$, we have that the statement $\forall u\exists v \left(v=f(u)\right)$ is true in $\mathcal{U}$. We can use $f$ to rewrite the superposition law in terms of $y_0$ and $y_1$ only. Indeed, any other solution of $y\in Ric(1,r,s)$, different from $y_0$, $y_1$ and $y_2$, are of the form
\begin{eqnarray*}
y&=&\frac{y_1(y_2-y_0)+\alpha y_0(y_1-y_2)}{\alpha(y_2-y_0)+\alpha(y_1-y_2)}\\
 &=& \frac{y_1(f(y_1)-y_0)+\alpha y_0(y_1-f(y_1))}{\alpha(f(y_1)-y_0)+\alpha(y_1-f(y_1))}
\end{eqnarray*}
for some $\alpha\in\mathbb{C}$. We write the last expression for the superposition law as $y=P(\alpha,y_0,y_1,f(y_1))$.
\par Let $\beta,\gamma\in\mathbb{C}$ be distinct and let $y_3=P(\beta,y_0,y_1,f(y_1))$ and $y_4=P(\gamma,y_0,y_1,f(y_1))$. Using the fact that the cross ratio of $y_1$, $y_2$, $y_3$ and $y_4$ is constant, we have that for some $\delta\in\mathbb{C}$
\begin{eqnarray*}
\delta &=& \frac{(y_4-y_2)(y_3-y_1)}{(y_4-y_1)(y_3-y_2)}\\
           &=& \frac{(P(\gamma,y_0,y_1,f(y_1))-f(y_1))(P(\beta,y_0,y_1,f(y_1))-y_1)}{(P(\gamma,y_0,y_1,f(y_1))-y_1)(P(\beta,y_0,y_1,f(y_1))-f(y_1))}.
\end{eqnarray*}
But the cross ratio of any four solutions $u$, $f(u)$, $P(\beta,y_0,u,f(u))$ and $P(\gamma,y_0,u,f(u))$ (all distinct from $y_0$) cannot be the same constant $\delta$ and hence we have an algebraic relation between $y_0$ and $y_1$ over $\mathbb{C}(t)^{alg}$ a contradiction.
\end{proof}

\section{Algebraic independence of generic $P_{III}$ and $P_{VI}$}
We are now ready to prove the main results in this paper. As in the previous section, $K$ still denotes the differential field $\mathbb{C}(t)$.

\subsection{The family $P_{III}$.}

The family $P_{III}$ is a $4$-parameter family: $P_{III}(\alpha,\beta,\gamma,\delta)$, $\alpha,\beta,\gamma,\delta\in\mathbb{C}$ is given by the following 
\begin{equation*}
y''=\frac{1}{y}(y')^2-\frac{1}{t}y'+\frac{1}{t}(\alpha y^2+\beta)+\gamma y^3+\frac{\delta}{y}.
\end{equation*}

Moreover, as explained in \cite{NagPil}, to study $P_{III}(\alpha,\beta,\gamma,\delta)$ when $\alpha,\beta,\gamma,\delta$ are algebraically independent over $\mathbb{Q}$, it is sufficient to work with a rewriting of the equation as a $2$-parameter Hamiltonian system: 

\[S_{III}({v_{1},v_{2}})\left\{
\begin{array}{rll}
y'&=&\frac{1}{t}(2y^2x-y^2+v_1y+t)\\
x'&=&\frac{1}{t}(-2yx^2+2yx-v_1x+\frac{1}{2}(v_1+v_2))
\end{array}\right.\]
where $\alpha = 4v_{2}$, $\beta = -4(v_{1}-1)$ and where $\gamma$ and $\delta$ are replaced by $4$ and $-4$ respectively  (also see \cite{Okam4} and \cite{Umemura3}). This follows since the two equations (or rather the sets they define) are in definable bijection. Note that in particular $\alpha$ and $\beta$ are algebraically independent over $\mathbb{Q}$ if and only if $v_{1}$ and $v_{2}$ are.

Not only does the Hamiltonian form of the equation reveals some of its symmetries, it also allows us to easily identify some of the family's Riccati subvarieties. We are mainly interested in the following subvarieties:
\begin{fct}{( \cite{Murata2}, \cite{Umemura3})}\\

(1) Suppose that $v_{1}=v_{2}$, then the Riccati variety $ric(v_1)$ (= $Ric(\frac{1}{t},\frac{v_{1}}{t},1)\times\{1\}$) defined by
\[
y'=\frac{1}{t}(y^2+v_{1}y+t)\;\;\;\;\;\text{ and }\;\;\;\;\;x=1
\]\vspace{0.05 in}
is a differential subvariety of $S_{III}(v_{1},v_{2})$. Furthermore, $ric(v_1)$ has no solution in $K^{alg}$.\\
(2) Suppose that $v_{1}-v_{2}\in 2\mathbb{Z}$, then there is an order one $\mathbb{C}(t)$-differential subvariety $\mathcal{R}(v_1,v_2)$ of $S_{III}(v_{1},v_{2})$. Furthermore, $\mathcal{R}(v_1,v_2)$ has no solutions in $K^{alg}$.

\end{fct}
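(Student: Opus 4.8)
The plan is to establish the two parts separately. In each case the assertion that we have a differential subvariety is a direct substitution, whereas the assertion that there are no solutions in $K^{alg}$ carries the real content and is exactly where the genericity (transcendence over $\mathbb{Q}$) of the parameters is used. Both statements are classical and go back to Umemura and Murata (\cite{Umemura3}, \cite{Murata2}); the aim of a proof here is to record the relevant computations and pinpoint the role of transcendence.

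For part (1), I would first substitute $x=1$ and $v_{1}=v_{2}$ into $S_{III}(v_1,v_2)$. The first equation becomes $y'=\frac1t(2y^2-y^2+v_1y+t)=\frac1t(y^2+v_1y+t)$, which is precisely $Ric(\frac1t,\frac{v_1}{t},1)$, while the second becomes $x'=\frac1t(-2y+2y-v_1+v_1)=0$, consistent with $x\equiv 1$; hence $ric(v_1)$ is a differential subvariety of $S_{III}(v_1,v_2)$. For the claim that $ric(v_1)$ has no solution in $K^{alg}$, I would pass (via Fact \ref{RicSimple} if convenient) to the Riccati itself and linearise it through $y=-t\,w'/w$, which yields the second order linear equation
\[
t\,w'' + (1-v_1)\,w' + w = 0.
\]
Here $t=0$ is a regular singular point with indicial roots $0$ and $v_1$, and $t=\infty$ is an irregular singular point whose solutions carry essential singularities of type $\exp(\pm 2i\sqrt{t}\,)$. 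An algebraic solution $y$ would force $w'/w=-y/t$ to be algebraic over $\mathbb{C}(t)$; a singularity analysis of the linear equation then shows that, since $v_1$ is transcendental (so the exponents $0$ and $v_1$ differ by a non-integer), no nonzero solution can have an algebraic logarithmic derivative. This is exactly where genericity is essential: for special (e.g.\ integer) values of $v_1$ the equation may acquire rational, hence algebraic, solutions. Alternatively one may appeal directly to the classification of \cite{Umemura3} and \cite{Murata2}, according to which Riccati and algebraic solutions occur only along a countable family of $\mathbb{Q}$-subvarieties of the parameter space, all of which generic $(v_1,v_2)$ avoids.

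For part (2), I would invoke the B\"acklund transformations of $P_{III}$, realised as $\mathbb{C}(t)$-definable birational automorphisms of the family $S_{III}$ that generate an action of the relevant (extended affine Weyl) group on $(v_1,v_2)$ by shifts. Starting from the seed subvariety $ric(v_1)$ at $v_1=v_2$ and composing the generators that translate $v_1-v_2$ by even integers, one transports $ric(v_1)$ to an order one $\mathbb{C}(t)$-subvariety $\mathcal{R}(v_1,v_2)$ of $S_{III}(v_1,v_2)$ for every $v_1-v_2\in 2\mathbb{Z}$. Because these maps are defined over $\mathbb{C}(t)$ and are birational, they carry $K^{alg}$-points to $K^{alg}$-points away from their finite indeterminacy loci, so the absence of $K^{alg}$-solutions in $ric(v_1)$ is inherited by $\mathcal{R}(v_1,v_2)$.

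The hard part is the transcendence statement in both parts, namely that neither $ric(v_1)$ nor $\mathcal{R}(v_1,v_2)$ has a solution in $K^{alg}$: verifying that the objects are subvarieties is a routine substitution, but ruling out $K^{alg}$-solutions demands genuine control of the singularity structure of the linearised equation together with a careful use of the genericity of the parameters. For part (2) one must additionally confirm that the B\"acklund transformations are truly defined over $\mathbb{C}(t)$ and that their indeterminacy loci introduce no spurious algebraic solutions.
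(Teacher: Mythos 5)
The paper does not actually prove this Fact: it is stated with citations to \cite{Murata2} and \cite{Umemura3}, and the only justification offered in the text is the remark immediately following it, namely that the $\mathcal{R}(v_1,v_2)$ are obtained from the seed $ric(v_1)$ by the $K$-definable B\"acklund transformations of \cite{Okam4}. Your substitution check for part (1) is correct (with $x=1$ and $v_1=v_2$ both equations of $S_{III}$ are visibly satisfied), and your part (2) — transporting $ric(v_1)$ by B\"acklund transformations that shift $v_1-v_2$ by even integers, with $\mathbb{C}(t)$-definability ensuring that the absence of $K^{alg}$-points is inherited — is exactly the paper's route; since on the solution sets these transformations are definable bijections, you do not even need the caveat about indeterminacy loci. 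Your fallback of citing the classifications of \cite{Umemura3} and \cite{Murata2} for the ``no solutions in $K^{alg}$'' claim likewise coincides with what the paper does.

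However, your attempted self-contained argument for the transcendence claim in part (1) has a genuine gap at its pivotal step. The linearisation is right: $y=-tw'/w$ does turn the Riccati into $tw''+(1-v_1)w'+w=0$, with indicial roots $0$ and $v_1$ at $t=0$ and ramified exponential behaviour $e^{\pm 2i\sqrt{t}}$ at infinity. But the inference ``the exponents differ by a non-integer, hence no nonzero solution has an algebraic logarithmic derivative'' is invalid as stated: an Euler equation $t^2w''+atw'+bw=0$ with irrational indicial roots $\rho$, $1-\rho$ has the solution $w=t^{\rho}$, whose logarithmic derivative $\rho/t$ is rational, so a non-integer exponent difference by itself rules out nothing. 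What is actually needed is a differential Galois (Kovacic-type) analysis of this Bessel-type equation: its solutions are $t^{v_1/2}J_{\pm v_1}(2\sqrt{t})$ up to constants, and the Bessel equation is known to have Galois group all of $SL_2(\mathbb{C})$ — in particular no Liouvillian solutions, hence no algebraic solutions of the associated Riccati — unless the index lies in $\tfrac12+\mathbb{Z}$; the ramified irregular singularity at infinity kills the rational-log-derivative and dihedral cases, and transcendence of $v_1$ excludes the half-odd-integer degenerations. So the role of genericity is correctly identified, but it enters through the Galois-theoretic case analysis (or through the cited classifications), not through the local exponent difference at $t=0$.
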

It is well known that there are $K$-definable bijections called the Backlund transformations of $S_{III}(v_{1},v_{2})$ and that the subvarieties $\mathcal{R}(v_1,v_2)$ are obtained from $ric(v_1)$ using those transformations (c.f. \cite{Okam4}). In particular, note that for any $v_1\in\mathbb{C}$ one has that $ric(v_1)=\mathcal{R}(v_1,v_1)$.

\begin{prop}\label{Secondprop}
Suppose $v_{1}-v_{2}\in 2\mathbb{Z}$ and let $y,z\in \mathcal{R}(v_1,v_2)$ be distinct. Then $tr.deg_{K}K(y,z)=2$, that is $y$ and $z$ are algebraically independent over $K=\mathbb{C}(t)$.
\end{prop}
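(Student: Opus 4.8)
The plan is to transfer the result from the Riccati setting of Proposition \ref{Mainprop} through the Backlund transformation. By the Fact above, $\mathcal{R}(v_1,v_2)$ is the image of the Riccati subvariety $ric(v_1)=Ric(\tfrac1t,\tfrac{v_1}t,1)\times\{1\}$ under a $K$-definable bijection $\phi$ (a composite of Backlund transformations), with $\phi=\mathrm{id}$ in the case $v_1=v_2$, since $ric(v_1)=\mathcal{R}(v_1,v_1)$. Given distinct $y,z\in\mathcal{R}(v_1,v_2)$, I would set $\tilde y=\phi^{-1}(y)$ and $\tilde z=\phi^{-1}(z)$ in $ric(v_1)$; since $\phi$ is a bijection these are distinct, and writing $\tilde y=(\hat y,1)$, $\tilde z=(\hat z,1)$ with $\hat y,\hat z\in Ric(\tfrac1t,\tfrac{v_1}t,1)$ again produces distinct Riccati solutions $\hat y\neq\hat z$.

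Because $ric(v_1)$ has no solution in $K^{alg}$, the underlying Riccati equation $Ric(\tfrac1t,\tfrac{v_1}t,1)$ has none either, so Proposition \ref{Mainprop} applies and yields $tr.deg_{K}K(\hat y,\hat z)=2$, that is $tr.deg_{K}K(\tilde y,\tilde z)=2$. To push this forward I would use that both $ric(v_1)$ and $\mathcal{R}(v_1,v_2)$ are order one, so for any solution the entire differential field it generates collapses to the ordinary field generated by its coordinates: $K\langle\tilde y\rangle=K(\tilde y)$ and $K\langle y\rangle=K(y)$, since all derivatives are rational functions of the coordinates over $K$. As $\phi$ and its inverse $\phi^{-1}$ are $K$-definable, we get $y=\phi(\tilde y)\in\mathrm{dcl}(K\tilde y)=K(\tilde y)$ and $\tilde y=\phi^{-1}(y)\in\mathrm{dcl}(Ky)=K(y)$, whence $K(y)=K(\tilde y)$; likewise $K(z)=K(\tilde z)$.

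It then follows that $K(y,z)=K(\tilde y,\tilde z)$, and therefore $tr.deg_{K}K(y,z)=tr.deg_{K}K(\tilde y,\tilde z)=2$, as required. The one point demanding care, and the main obstacle, is this transfer step: one must verify that being a $K$-definable bijection between these specific order-one subvarieties genuinely forces the generated fields to coincide (or at least to be interalgebraic over $K$, which already suffices for equality of transcendence degree), and in particular that the parameter shift $v_1\mapsto v_2$ along the Backlund chain introduces no hidden algebraicity. Once the order-one structure is used to replace $K\langle\,\cdot\,\rangle$ by $K(\,\cdot\,)$, the transcendence statement transfers verbatim from the Riccati case and no further computation is needed.
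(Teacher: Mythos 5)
Your proposal is correct and takes essentially the same approach as the paper, whose proof likewise pulls the statement back through the $\mathbb{C}(t)$-definable Backlund bijections to $ric(v_1)$ and invokes Proposition \ref{Mainprop}. The transfer step you single out as the main obstacle is exactly what the paper treats as immediate from $K$-definability, and your justification of it is sound: in $DCF_0$, $\mathrm{dcl}$ of a set is the differential field it generates, and since the derivatives of the coordinates are rational in the coordinates over $K$, the fields $K\langle\,\cdot\,\rangle$ and $K(\,\cdot\,)$ coincide, so interdefinability (indeed interalgebraicity alone would suffice) preserves the transcendence degree.
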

\begin{proof}
This really follows from Proposition \ref{Mainprop}. Indeed, from the latter we have that if $y,z\in Ric(v_1)$ are distinct, then $tr.deg_{K}K(y,z)=2$. Using the Backlund transformations (especially the fact that they are $\mathbb{C}(t)$-definable bijections), we have that the same must be true of distinct solutions of $\mathcal{R}(v_1,v_2)$ with $v_{1}-v_{2}\in 2\mathbb{Z}$.
\end{proof}
We are almost ready to prove our main result for generic $S_{III}({v_{1},v_{2}})$. Let us now recall what is known so far (some of which were already mentioned in the introduction). Throughout $X_{III}(v_{1},v_{2})$ denotes the set of solutions of $S_{III}(v_{1},v_{2})$.
\begin{fct}{(\cite{NagPil}, \cite{Murata2})}\label{P3Trivial}
Suppose $v_1$ and $v_2$ are algebraically independent over $\mathbb{Q}$. Then $X_{III}(v_{1},v_{2})$ is strongly minimal, geometrically trivial and has no solution in $K^{alg}$.
\end{fct}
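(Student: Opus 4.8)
The plan is to establish the three asserted properties in the order that makes each usable in the next: first the absence of algebraic solutions, then strong minimality, and finally geometric triviality. The two analytic inputs I would draw on are Murata's classification of the classical (algebraic, Riccati, and Liouvillian) solutions of $P_{III}$ \cite{Murata2} and the Umemura--Watanabe irreducibility theorem for the generic Painlev\'e equations; the model-theoretic input is the Hrushovski--Sokolovi\'c dichotomy for strongly minimal sets in $DCF_0$. For the absence of solutions in $K^{alg}$, I would observe that a point of $X_{III}(v_1,v_2)$ with coordinates in $K^{alg}=\mathbb{C}(t)^{alg}$ is exactly an algebraic solution of the system; by Murata's classification these exist only when $v_1,v_2$ satisfy one of finitely many explicit polynomial relations over $\mathbb{Q}$ (integrality or half-integrality constraints). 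Since $v_1$ and $v_2$ are algebraically independent over $\mathbb{Q}$, no such relation can hold, so $X_{III}(v_1,v_2)\cap K^{alg}=\emptyset$.

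For strong minimality, the cleanest route is through irreducibility. Suppose for contradiction that $X_{III}(v_1,v_2)$ is not strongly minimal; since it is two-dimensional as a $D$-variety (order two), this forces some solution $(y,x)$ to satisfy a nontrivial first-order algebraic differential equation over a differential field extension $F\supseteq K$ without being algebraic over $F$, i.e. to generate an order-one differential subfield of its solution field over $F$. An order-one algebraic differential equation has only classical solutions (by the order-one Painlev\'e analysis it is birationally a Riccati or an elliptic/abelian equation), so $(y,x)$ would be classical in the Umemura sense, contradicting the Umemura--Watanabe irreducibility of generic $P_{III}$. Alternatively, one can argue by descent: the canonical parameter of any order-one subvariety lies in the algebraic closure of the coefficient field $\mathbb{Q}(v_1,v_2,t)$, where Murata's classification shows the only order-one subvarieties are the Riccati families $\mathcal{R}(v_1,v_2)$, which require $v_1-v_2\in 2\mathbb{Z}$ and are again excluded by genericity. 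Either way $X_{III}(v_1,v_2)$ is strongly minimal.

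For geometric triviality, I would invoke the trichotomy: a strongly minimal set in $DCF_0$ is either geometrically trivial, non-orthogonal to the field of constants $\mathbb{C}$, or non-orthogonal to a modular group-like minimal type (a Manin kernel of a simple abelian variety). In either non-trivial case the generic solution would stand in a finite-to-finite $F$-definable correspondence with an element controlled by the constants or by an abelian/linear structure, hence would again be classical, contradicting irreducibility. Ruling out these two cases leaves geometric triviality, which in the concrete formulation used in the introduction means precisely that pairwise algebraic independence of solutions over any $F\supseteq K$ upgrades to joint algebraic independence.

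The main obstacle is the strong-minimality step, and specifically making the reduction to a classification rigorous. In the irreducibility route one must argue carefully that a genuine first-order subvariety over an \emph{arbitrary} extension $F$ (not merely over $K$) forces classicality; in the descent route one must bound and then descend possibly high-degree invariant curves to $\mathbb{Q}(v_1,v_2,t)^{alg}$ before Murata's finite list applies. Both require the $\omega$-stability of $DCF_0$ (canonical bases of finite transcendence degree) together with a specialization argument that exploits the algebraic independence of $v_1,v_2$ over $\mathbb{Q}$: one transfers a hypothetical subvariety to the parameter field, where the classification contradicts genericity.
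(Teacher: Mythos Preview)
The paper does not prove this statement; it is recorded as a Fact with citations to \cite{NagPil} and \cite{Murata2}, and the arguments live in those references. Your outline broadly reconstructs the strategy actually used in \cite{NagPil}: absence of algebraic solutions from the classification, strong minimality from the Umemura--Watanabe irreducibility work, and geometric triviality from the Hrushovski--Sokolovi\'c trichotomy. In that sense your approach and the paper's (by reference) coincide.

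One step, however, is wrong as written. In the strong-minimality argument you assert that ``an order-one algebraic differential equation has only classical solutions (by the order-one Painlev\'e analysis it is birationally a Riccati or an elliptic/abelian equation).'' This is false: an order-one equation $f(y,y',t)=0$ over an extension $F$ can cut out a curve of arbitrary genus in the $(y,y')$-plane, and a solution satisfying such an equation over some transcendental $F\supseteq K$ is not thereby classical over $K=\mathbb{C}(t)$ in Umemura's sense. What actually does the work in \cite{NagPil} is that the Umemura--Watanabe result (their condition (J)) already quantifies over all differential extensions: it says precisely that for generic parameters no solution of $S_{III}$ has transcendence degree one over any $F$. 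Strong minimality is then a restatement of their theorem, not a consequence of a further Riccati/elliptic dichotomy. Your alternative descent route is viable, but then the input you need at the bottom is Murata's list of order-one $K^{alg}$-subvarieties of $S_{III}$, not a structure theorem for general order-one ODEs.

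A similar slip occurs in the triviality step. Non-orthogonality to the constants or to a Manin kernel does not by itself produce a ``classical'' solution; in \cite{NagPil} these branches are excluded by extracting \emph{algebraic} solutions over $K^{alg}$ (torsion points in the Manin-kernel case), which contradicts the first step. So the contradiction runs through the absence of algebraic solutions specifically, not through irreducibility in the broad sense you invoke.
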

Note that by our above discussion, the same holds of $P_{III}(\alpha,\beta,\gamma,\delta)$, where $\alpha,\beta,\gamma,\delta\in\mathbb{C}$ are algebraically independent over $\mathbb{Q}$.
\begin{prop}\label{mainPIII}
Suppose that $v_1$ and $v_2$ are algebraically independent over $\mathbb{Q}$. If $y_{1},...,y_{n}\in X_{III}(v_{1},v_{2})$ are distinct, then $tr.deg_{K}K(y_1,y'_1,\ldots,y_n,y'_n)=2n$.
\end{prop}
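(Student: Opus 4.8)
The plan is to follow the two-step strategy described in the introduction for $P_{II}(\alpha_0)$, now using the Riccati independence of Proposition \ref{Secondprop} in the role of step (i). By Fact \ref{P3Trivial}, $X_{III}(v_1,v_2)$ is strongly minimal, geometrically trivial and has no solution in $K^{alg}$. Because geometric triviality says that the associated pregeometry is degenerate, a finite set $y_1,\dots,y_n$ of distinct solutions is algebraically independent over $K$ as soon as it is \emph{pairwise} independent; and since the equation has no solution in $K^{alg}$, strong minimality already forces $tr.deg_{K}K(y_i,y_i')=2$ for each single solution (otherwise $y_i$ would satisfy an order-one equation over $K$ and lie in $K^{alg}$). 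Hence the whole statement reduces to showing that any two distinct solutions $y_1,y_2$ satisfy $tr.deg_{K}K(y_1,y_1',y_2,y_2')=4$, equivalently that $y_2\notin\operatorname{acl}(K,y_1)=K(y_1,y_1')^{alg}$.

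I would then argue by contradiction, assuming $y_2\in K(y_1,y_1')^{alg}$ for some pair of distinct solutions. By strong minimality together with $\omega$-stability, this dependence is witnessed by a finite-to-finite algebraic correspondence $C\subseteq X_{III}(v_1,v_2)\times X_{III}(v_1,v_2)$ between distinct solutions, whose defining data lies over $\mathbb{Q}(v_1,v_2,t)^{alg}$. The crucial observation is that the existence of such a correspondence of bounded complexity is a \emph{definable} property of the parameter $(v_1,v_2)$. Since the field of constants is a pure algebraically closed field inside $\mathcal{U}$, the set of parameters enjoying this property is constructible over $\mathbb{Q}$; as $(v_1,v_2)$ is generic over $\mathbb{Q}$, this set contains the generic point of $\mathbb{C}^2$ and is therefore Zariski-dense, its complement being a proper subvariety. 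This is exactly the point at which the genericity of $(v_1,v_2)$ is used.

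Next I would specialize onto the Riccati locus. The special parameters $\{(w_1,w_2):w_1-w_2\in 2\mathbb{Z}\}$, for which the subvariety $\mathcal{R}(w_1,w_2)$ exists and has no solution in $K^{alg}$, are Zariski-dense in $\mathbb{C}^2$, so they meet the dense set found above; choose such a $(w_1,w_2)$ with $w_1$ transcendental. Then $S_{III}(w_1,w_2)$ carries both a nondegenerate correspondence $\bar C$ relating distinct solutions and a Riccati subvariety. Now I feed a Riccati solution into the correspondence: among the infinitely many $\bar y_1\in\mathcal{R}(w_1,w_2)$, pick one admitting a partner $\bar y_2\in\bar C(\bar y_1)$ with $\bar y_2\neq\bar y_1$. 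Since $\bar y_1$ has order one, $K(\bar y_1)$ is a differential field and $\bar y_2\in K(\bar y_1)^{alg}$; the derivation extends uniquely to this algebraic extension, so $\bar y_2'\in K(\bar y_1)^{alg}$ and hence $tr.deg_{K}K(\bar y_2,\bar y_2')\le 1$. Thus $\bar y_2$ satisfies a first order algebraic differential equation over $K$ and must itself sit on a Riccati subvariety. This yields two distinct, algebraically dependent solutions of Riccati subvarieties, contradicting Proposition \ref{Secondprop} (via the B\"acklund transformations, which identify all such subvarieties with $ric(v_1)$ and hence reduce the dependence to Proposition \ref{Mainprop}).

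The main obstacle is making the specialization rigorous. One must guarantee that passing from the generic $(v_1,v_2)$ to the special $(w_1,w_2)$ preserves a genuine correspondence between \emph{distinct} solutions, rather than collapsing it to the diagonal or degenerating its defining data; this is precisely what the definability-plus-genericity bookkeeping of the second paragraph is meant to secure. The second, Painlev\'e-specific, ingredient is the claim that an order-one solution of $S_{III}(w_1,w_2)$ necessarily lies on a Riccati subvariety covered by Proposition \ref{Secondprop}; this is where the differential-algebraic description of the classical solutions of $P_{III}$ enters, and it is the point to treat with most care.
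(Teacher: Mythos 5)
Your proposal is correct and takes essentially the same approach as the paper: reduce to pairwise independence via geometric triviality (Fact \ref{P3Trivial}), express the hypothetical algebraic dependence as a first-order condition $\theta$ on the parameters (the paper's $\forall z\exists^{m}x\,\phi$, which holds of all solutions because they all realize the generic type of the strongly minimal set), specialize by genericity onto the locus $v_1-v_2\in 2\mathbb{Z}$, and derive a contradiction with Proposition \ref{Secondprop} after forcing the algebraic partner of a Riccati solution onto $\mathcal{R}(\tilde v_1,v_2)$ via the order-one/no-algebraic-solutions argument. The only cosmetic difference is that the paper specializes just $v_1$ to some $\tilde v_1\in 2\mathbb{Z}+v_2$ while keeping $v_2$ transcendental, so the ``all but finitely many'' step follows immediately from strong minimality of the constant field in one variable rather than your two-variable constructibility argument; your insistence that $w_1$ be transcendental plays the same safeguarding role as the paper's untouched $v_2$.
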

\begin{proof}
Let $v_1,v_2\in\mathbb{C}$ be algebraically independent over $\mathbb{Q}$. Throughout, $F$ denotes the field $\mathbb{Q}(v_1,v_2,t)$. Using Fact \ref{P3Trivial}, it suffices to show that if $y_1,y_2\in X_{III}(v_{1},v_{2})$ are distinct, then $tr.deg_{F}F(y_1,y'_1,y_2,y'_2)=4$. So let $y_1$ and $y_2$ be distinct and for contradiction suppose that $y_1\in F\gen{y_2}^{alg}$. By definition, there is an $L_{\partial}$-formula $\phi(x,z,z_1,z_2,z_3)$ such that $\phi(y_1,y_2,v_1,v_2,t)$ holds and witnesses that $y_1$ is algebraic over $F\gen{y_2}$, that is $\exists^{m}x\phi(x,y_2,v_1,v_2,t)$  ($m\in\mathbb{N}$). Note that here both $x,y$ are in the sort $X_{III}(v_{1},v_{2})$. 
\par Since $X_{III}(v_{1},v_{2})$ is strongly minimal and has no solution in $K^{alg}$, we have that $\forall z\exists^{m}x\phi(x,z,v_1,v_2,t)$ is true. Let us denote by  $\theta(z_1,z_2,z_3)$ the formula $\forall z\exists^{m}x\phi(x,z,z_1,z_2,z_3)$. So $\mathcal{U}\models\theta(v_1,v_2,t)$. As $v_1$ and $v_2$ are algebraically independent over $\mathbb{Q}(t)$, for all but finitely many $z_1\in\mathbb{C}$ we have that $\mathcal{U}\models\theta(z_1,v_2,t)$. So in particular for some $\tilde{v}_1\in 2\mathbb{Z}+v_2$, we have that $\theta(\tilde{v}_1,v_2,t)$ holds, that is for all $y\in X_{III}(\tilde{v}_1,v_{2})$ there is a $x\in X_{III}(\tilde{v}_1,v_{2})$ such that $x\in\mathbb{Q}(\tilde{v}_1,v_2,t)\gen{y}^{alg}$. If we take $p\in \mathcal{R}(\tilde{v}_1,v_2)\subset X_{III}(\tilde{v}_1,v_{2})$, then there is $q\in X_{III}(\tilde{v}_1,v_{2})$ such that $q\in\mathbb{Q}(\tilde{v}_1,v_2,t)\gen{p}^{alg}$. But $tr.deg_{\mathbb{Q}(\tilde{v}_1,v_2,t)}\mathbb{Q}(\tilde{v}_1,v_2,t)\gen{p}=1$ and $X_{III}(\tilde{v}_1,v_{2})$ has no solutions in $K^{alg}$. So this forces $q\in \mathcal{R}(\tilde{v}_1,v_2)$. But then $p,q\in \mathcal{R}(\tilde{v}_1,v_2)$ are algebraically dependent over $K$, contradicting Proposition \ref{Secondprop}.
\end{proof}
\subsection{The family $P_{VI}$}

Recall that $P_{VI}(\alpha,\beta,\gamma,\delta)$, $\alpha,\beta,\gamma,\delta \in \mathbb{C}$, is given by the following equation
\begin{equation*}
\begin{split}
y''=&\frac{1}{2}\left(\frac{1}{y}+\frac{1}{y+1}+\frac{1}{y-t}\right)(y')^2-\left(\frac{1}{t}+\frac{1}{t-1}+\frac{1}{y-t}\right)y'\\
&+\frac{y(y-1)(y-t)}{t^2(t-1)^2}\left(\alpha+\beta\frac{t}{y^2}+\gamma\frac{t-1}{(y-1)^2}+\delta\frac{t(t-1)}{(y-t)^2}\right)
\end{split}
\end{equation*}

As with the family $P_{III}$, when working with generic parameters, it is enough to work with the rewriting of the equation as the Hamiltonian system (c.f. \cite{NagPil}):
\[S_{VI}(\boldsymbol{\alpha})\left\{
\begin{array}{rll}
y' &=& \frac{1}{t(t-1)}(2xy(y-1)(y-t)-\{\alpha_4(y-1)(y-t)+\alpha_3y(y-t)\\
& & +(\alpha_0-1)y(y-1)\})\\
x'&=& \frac{1}{t(t-1)}(-x^2(3y^2-2(1+t)y+t)+x\{2(\alpha_0+\alpha_3+\alpha_4-1)y\\
& &-\alpha_4(1+t)-\alpha_3t-\alpha_0+1\}-\alpha_2(\alpha_1+\alpha_2))
\end{array}\right.\]
where $\boldsymbol{\alpha}=(\alpha_{0}, \alpha_{1}, \alpha_{2},\alpha_{3}, \alpha_{4})$ is a tuple of complex numbers such that $\alpha=\frac{1}{2}\alpha_1^2$, $\beta=-\frac{1}{2}\alpha_4^2$, $\gamma=\frac{1}{2}\alpha^2_3$ and $\delta=\frac{1}{2}(1-\alpha^2_0)$ and such that $\alpha_2$ satisfies  $\alpha_0+\alpha_1+2\alpha_2+\alpha_3+\alpha_4=1$.
 As before the Hamiltonian form of $P_{VI}$ reveals some of the family's Riccati subvarieties. The ones we focus on are:
\begin{fct}{(\cite{Lisovyy}, \cite{NagPil})}\\

(1) Suppose that $\alpha_1,\alpha_3,\alpha_4$ are transcendental and algebraically independent over $\mathbb{Q}$, then the Riccati variety $ric(\alpha_1,\alpha_2,\alpha_3,\alpha_4)$ defined by
\[
y=t\;\;\;\;\;\text{ and }\;\;\;\;\;x'= -x^2+\left(\frac{(\alpha_3+\alpha_4-2)t-\alpha_4+1}{t(t-1)}\right)x-\frac{\alpha_2(\alpha_1+\alpha_2)}{t(t-1)}
\]\vspace{0.05 in}
is a differential subvariety of $S_{VI}(0,\alpha_1,\alpha_2,\alpha_3,\alpha_4)$. Furthermore, $ric(\alpha_1,\alpha_2,\alpha_3,\alpha_4)$ has no solution in $K^{alg}$.\\
(2) If $\alpha_1,\alpha_3,\alpha_4\in\mathbb{C}$ are transcendental and algebraically independent over $\mathbb{Q}$ and if $\alpha_0\in 2\mathbb{Z}$,  then there is an order one $\mathbb{C}(t)$-differential subvariety $\mathcal{R}(\boldsymbol{\alpha})$ of $S_{VI}(\boldsymbol{\alpha})$. Furthermore, $\mathcal{R}(\boldsymbol{\alpha})$ has no solutions in $K^{alg}$.

\end{fct}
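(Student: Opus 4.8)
The plan is to establish the two items separately, deriving (2) from (1) via the B\"acklund symmetries. Item (1) itself has two halves: first, that the locus $\{y=t,\ x'=-x^2+b(t)x+c(t)\}$ really is a differential subvariety of $S_{VI}(0,\alpha_1,\alpha_2,\alpha_3,\alpha_4)$, where $b(t)=\frac{(\alpha_3+\alpha_4-2)t-\alpha_4+1}{t(t-1)}$ and $c(t)=-\frac{\alpha_2(\alpha_1+\alpha_2)}{t(t-1)}$; and second, that this Riccati equation has no solution in $K^{alg}$. The containment is a direct substitution: setting $y=t$ (so $y'=1$) kills every term of the first equation of $S_{VI}$ carrying a factor $(y-t)$, leaving $y'=-(\alpha_0-1)=1-\alpha_0$, which forces precisely $\alpha_0=0$ --- this is exactly why the statement is phrased for $S_{VI}(0,\alpha_1,\alpha_2,\alpha_3,\alpha_4)$. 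Plugging $y=t$, $\alpha_0=0$ into the second equation and simplifying $3y^2-2(1+t)y+t=t(t-1)$ then collapses it exactly to the displayed Riccati equation for $x$; I would present this as a short computation.

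The substantive half is showing that no $K^{alg}$-solution exists. I would pass to the associated linear equation: a solution $x\in\mathbb{C}(t)^{alg}$ of $x'=-x^2+bx+c$ is the same datum as a first-order right factor $\partial-x$ of the operator $\partial^2-b\partial-c$ over $\mathbb{C}(t)^{alg}$, i.e. as reducibility over $\mathbb{C}(t)^{alg}$ of the linear equation $w''-bw'-cw=0$ (via $x=w'/w$). This linear equation is Fuchsian with regular singular points only at $0,1,\infty$, hence a Gauss hypergeometric equation. Computing indicial equations, I find the local exponents $\{0,\alpha_4\}$ at $0$ and $\{0,\alpha_3\}$ at $1$, and at $\infty$ exponents of sum $1-\alpha_3-\alpha_4$ and product $\alpha_2(\alpha_1+\alpha_2)$; using the parameter constraint $\alpha_1+2\alpha_2=1-\alpha_3-\alpha_4$ the exponent difference at $\infty$ simplifies to $\alpha_1$. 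Thus the three exponent differences are exactly $\alpha_4,\alpha_3,\alpha_1$. By the classical reducibility criterion for hypergeometric equations, reducibility of the monodromy forces one of $\pm\alpha_1\pm\alpha_3\pm\alpha_4$ to be an odd integer; since $\alpha_1,\alpha_3,\alpha_4$ are algebraically independent over $\mathbb{Q}$, hence $\mathbb{Q}$-linearly independent together with $1$, no such relation can hold. Therefore the operator is irreducible over $\mathbb{C}(t)^{alg}$ and the Riccati equation has no $K^{alg}$-solution.

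For item (2), I would invoke the B\"acklund transformation structure of $S_{VI}$ (the affine Weyl group of type $D_4^{(1)}$ acting on $\boldsymbol{\alpha}$): these are $\mathbb{C}(t)$-definable bijections between $S_{VI}(\boldsymbol{\alpha})$ and $S_{VI}(\sigma\cdot\boldsymbol{\alpha})$, and the generator shifting $\alpha_0\mapsto\alpha_0+2$ allows one to move from $\alpha_0=0$ to any $\alpha_0\in2\mathbb{Z}$ while fixing $\alpha_1,\alpha_3,\alpha_4$. Transporting $ric(\alpha_1,\alpha_2,\alpha_3,\alpha_4)$ along such a composition produces the desired order-one subvariety $\mathcal{R}(\boldsymbol{\alpha})$; because the transformation is a $\mathbb{C}(t)$-definable bijection it carries order-one subvarieties to order-one subvarieties and, crucially, preserves the property of having no point in $K^{alg}$. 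This is exactly parallel to how $\mathcal{R}(v_1,v_2)$ is obtained from $ric(v_1)$ in the $P_{III}$ discussion preceding Proposition~\ref{Secondprop}.

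The main obstacle is the irreducibility argument in item (1): one must pin down the exponent differences correctly --- in particular using the constraint $\alpha_1+2\alpha_2=1-\alpha_3-\alpha_4$ to recover $\alpha_1$ as the difference at infinity --- and then cite the precise reducibility criterion, after which genericity does the rest. The two substitution and transport computations are routine by comparison, the only genuine care being to verify that the B\"acklund generators are honestly $\mathbb{C}(t)$-definable, so that the absence of $K^{alg}$-solutions is indeed transported to $\mathcal{R}(\boldsymbol{\alpha})$.
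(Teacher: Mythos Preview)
The paper does not prove this statement at all: it is stated as a \emph{Fact} with citations to \cite{Lisovyy} and \cite{NagPil}, and the only commentary the paper adds is the sentence immediately following, which notes that the $\mathcal{R}(\boldsymbol{\alpha})$ are obtained from $ric(\alpha_1,\alpha_2,\alpha_3,\alpha_4)$ via the B\"acklund transformations of \cite{Okam1}. So there is no ``paper's own proof'' to compare against; you are supplying an argument where the paper simply defers to the literature.

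That said, your sketch is sound. The substitution check in (1) is correct (in particular $3t^2-2(1+t)t+t=t(t-1)$ and the linear coefficient in $x$ collapses as stated). Your linearisation $x=w'/w$ yields $w''-bw'-cw=0$, and the exponent computation is right: partial fractions give $-b=\frac{1-\alpha_4}{t}+\frac{1-\alpha_3}{t-1}$, so the exponents at $0$ and $1$ are $\{0,\alpha_4\}$ and $\{0,\alpha_3\}$; at $\infty$ the indicial polynomial is $\rho^2+(\alpha_3+\alpha_4-1)\rho+\alpha_2(\alpha_1+\alpha_2)$, and the constraint $\alpha_1+2\alpha_2=1-\alpha_3-\alpha_4$ makes the discriminant exactly $\alpha_1^2$. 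The reducibility criterion you cite (one of $\pm\alpha_1\pm\alpha_3\pm\alpha_4$ an odd integer) is the standard hypergeometric one, and genericity of $\alpha_1,\alpha_3,\alpha_4$ over $\mathbb{Q}$ kills all such relations. Your treatment of (2) via transport along $\mathbb{C}(t)$-definable B\"acklund bijections is exactly what the paper's remark after the Fact indicates, and is parallel to the $P_{III}$ discussion. The only point on which to be slightly more careful in a full write-up is distinguishing reducibility over $\mathbb{C}(t)$ from reducibility over $\mathbb{C}(t)^{alg}$; but for hypergeometric operators the criterion is the same, so this is not a genuine gap.
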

 As with $S_{III}$, we have that $\mathcal{R}(\boldsymbol{\alpha})=ric(\alpha_1,\alpha_2,\alpha_3,\alpha_4)$ when $\boldsymbol{\alpha}=(0,\alpha_1,\alpha_2,\alpha_3,\alpha_4)$, and that all other $\mathcal{R}(\boldsymbol{\alpha})$'s are obtained from $ric(\alpha_1,\alpha_2,\alpha_3,\alpha_4)$ by applying the well-known Backlund transfromations (c.f. \cite{Okam1}). Furthermore we have the following result (the proof is analogous to that of Proposition \ref{Secondprop})
 
 \begin{prop}\label{Thirdprop}
 Let $\alpha_1,\alpha_3,\alpha_4\in\mathbb{C}$ be transcendental and algebraically independent over $\mathbb{Q}$ and let $\alpha_0\in 2\mathbb{Z}$.  Suppose that $y,z\in \mathcal{R}(\boldsymbol{\alpha})$ be distinct. Then $tr.deg_{K}K(y,z)=2$, that is $y$ and $z$ are algebraically independent over $K=\mathbb{C}(t)$.
 \end{prop}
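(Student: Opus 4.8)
The plan is to reduce the statement to Proposition~\ref{Mainprop}, exactly as in the proof of Proposition~\ref{Secondprop}, with the Backlund transformations serving as the bridge between a genuine Riccati variety and the twisted subvariety $\mathcal{R}(\boldsymbol{\alpha})$. The two ingredients I would isolate are: (i) that the base subvariety $ric(\alpha_1,\alpha_2,\alpha_3,\alpha_4)$ is, up to its trivial first coordinate $y=t$, a Riccati set $Ric(a,b,c)$ with no solution in $K^{alg}$, so that Proposition~\ref{Mainprop} applies to it directly; and (ii) that for even $\alpha_0$ the set $\mathcal{R}(\boldsymbol{\alpha})$ is the image of $ric(\alpha_1,\alpha_2,\alpha_3,\alpha_4)$ under a $\mathbb{C}(t)$-definable bijection, so that transcendence degree over $K$ is preserved.

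First I would handle the case $\boldsymbol{\alpha}=(0,\alpha_1,\alpha_2,\alpha_3,\alpha_4)$, where by the discussion preceding the proposition $\mathcal{R}(\boldsymbol{\alpha})=ric(\alpha_1,\alpha_2,\alpha_3,\alpha_4)$. A solution of $ric$ has first coordinate $y=t\in K$, while its second coordinate $x$ ranges over the Riccati set $Ric(-1,b,c)$ with $b=\frac{(\alpha_3+\alpha_4-2)t-\alpha_4+1}{t(t-1)}$ and $c=-\frac{\alpha_2(\alpha_1+\alpha_2)}{t(t-1)}$, which has no solution in $K^{alg}$. Hence for distinct solutions $y,z$ of $ric$, with distinct second coordinates $x_y\neq x_z$, Proposition~\ref{Mainprop} gives $tr.deg_{K}K(x_y,x_z)=2$; since the points are $(t,x_y)$ and $(t,x_z)$ and $t\in K$, this is exactly $tr.deg_{K}K(y,z)=2$.

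For general $\alpha_0\in 2\mathbb{Z}$, I would use that $\mathcal{R}(\boldsymbol{\alpha})$ is obtained from $ric(\alpha_1,\alpha_2,\alpha_3,\alpha_4)$ by a composition of the generating Backlund transformations $\phi$, a $\mathbb{C}(t)$-definable bijection of the ambient solution sets that restricts to a bijection $ric\to\mathcal{R}(\boldsymbol{\alpha})$. Given distinct $y,z\in\mathcal{R}(\boldsymbol{\alpha})$, write $y=\phi(p)$ and $z=\phi(q)$ with $p,q\in ric$ distinct. Because $\phi$ and $\phi^{-1}$ are realized by mutually inverse rational maps over $K$ in the Hamiltonian coordinates $(y,x)$, and because the first-order relation defining $ric$ and $\mathcal{R}(\boldsymbol{\alpha})$ expresses the relevant derivatives rationally over $K$, the fields $K(y)$ and $K(p)$ have the same algebraic closure over $K$, and likewise for $K(z)$ and $K(q)$. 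Consequently $K(y,z)^{alg}=K(p,q)^{alg}$, whence $tr.deg_{K}K(y,z)=tr.deg_{K}K(p,q)$, and the previous paragraph shows the right-hand side is $2$.

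The step I expect to require the most care is the transfer in the last paragraph: making precise that a Backlund transformation preserves transcendence degree over $K$. The point to verify is that $\phi$ is not merely an abstract definable bijection but is given by $K$-rational maps with $K$-rational inverse, so that $K(p)$ and $K(\phi(p))$ share an algebraic closure; and that \emph{bijectivity} is what guarantees that every pair of distinct solutions of $\mathcal{R}(\boldsymbol{\alpha})$ arises as $(\phi(p),\phi(q))$ for distinct $p,q\in ric$, so that no pair escapes the argument. One should also note that since the relevant solutions are transcendental over $K$, they avoid the finitely many points where the rational expressions for $\phi$ and $\phi^{-1}$ degenerate, so the rational formulas genuinely apply. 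Everything else is a direct appeal to Proposition~\ref{Mainprop}.
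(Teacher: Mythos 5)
Your proposal is correct and follows essentially the same route as the paper, which simply declares the proof ``analogous to that of Proposition~\ref{Secondprop}'': reduce to Proposition~\ref{Mainprop} applied to the Riccati set in the $x$-coordinate (the $y=t$ coordinate lying in $K$), then transfer to general $\alpha_0\in 2\mathbb{Z}$ via the Backlund transformations as $\mathbb{C}(t)$-definable (indeed birational) bijections preserving transcendence degree over $K$. You merely spell out the details the paper leaves implicit, and your added care about bijectivity and the non-degeneracy of the rational formulas is consistent with what the paper's citation of the Okamoto transformations provides.
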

 
We denote by $X_{VI}(\boldsymbol{\alpha})$ denotes the set of solutions of $S_{VI}(\boldsymbol{\alpha})$.
 
 \begin{fct}{(\cite{NagPil}, \cite{Lisovyy}))}\label{P6Trivial}
 Suppose that $\alpha_0,\alpha_1,\alpha_3,\alpha_4\in\mathbb{C}$ are transcendental and algebraically independent over $\mathbb{Q}$. Then $X_{VI}(\boldsymbol{\alpha})$ is strongly minimal, geometrically trivial and has no solutions in $K^{alg}$.
 \end{fct}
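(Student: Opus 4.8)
The plan is to establish the three assertions separately, following the strategy of \cite{NagPil} and \cite{Lisovyy}; by the definable bijection with $P_{VI}(\alpha,\beta,\gamma,\delta)$ noted above, I may argue with whichever form is convenient. I would begin with the absence of $K^{alg}$-points. A point $(y,x)\in S_{VI}(\boldsymbol{\alpha})$ with $y\in K^{alg}=\mathbb{C}(t)^{alg}$ is exactly an algebraic solution of the associated $P_{VI}(\alpha,\beta,\gamma,\delta)$. By the classification of algebraic (indeed, of all classical) solutions of $P_{VI}$ in \cite{Lisovyy}, any such solution forces $\boldsymbol{\alpha}$ onto a countable union of explicit arithmetic loci defined over $\mathbb{Q}$. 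Since $\alpha_0,\alpha_1,\alpha_3,\alpha_4$ are transcendental and algebraically independent over $\mathbb{Q}$, they lie on none of these, so $X_{VI}(\boldsymbol{\alpha})$ has no solution in $K^{alg}$.

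Next I would show strong minimality, i.e. that whenever a solution $y$ satisfies a first-order algebraic differential equation over a differential field $F\supseteq K$, then $y$ is already algebraic over $F$. Geometrically this amounts to $X_{VI}(\boldsymbol{\alpha})$ being an irreducible order-two differential variety with no order-one differential subvariety through a generic point. The order-one subvarieties are precisely the loci of Riccati-type (classical) solutions, and by Watanabe's classification together with the description of $\mathcal{R}(\boldsymbol{\alpha})$ recorded in the Facts above, these occur only under resonance conditions such as $\alpha_0\in 2\mathbb{Z}$ and their B\"acklund translates. For transcendental, independent $\alpha_0,\alpha_1,\alpha_3,\alpha_4$ no such condition can hold, so no order-one subvariety exists. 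This is exactly the Umemura--Watanabe irreducibility of generic $P_{VI}$, and combined with the absence of $K^{alg}$-points it yields strong minimality; in model-theoretic terms the generic type of $X_{VI}(\boldsymbol{\alpha})$ has Morley rank one.

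Finally, with strong minimality in hand I would apply the Zilber trichotomy for strongly minimal sets in $DCF_0$ (Hrushovski--Sokolovi\'c, Pillay--Ziegler): $X_{VI}(\boldsymbol{\alpha})$ is geometrically trivial, non-orthogonal to the constants $\mathbb{C}$, or non-trivial locally modular (hence non-orthogonal to the Manin kernel of a simple abelian variety). The field-like case is excluded by the same irreducibility input: non-orthogonality to $\mathbb{C}$ would produce a generic solution inter-algebraic over $K$ with a tuple of constants, i.e. a classical solution over a constant extension, which irreducibility forbids. The remaining, and principal, obstacle is to exclude the locally modular group-like case. I would argue, as in \cite{NagPil}, that non-orthogonality to a Manin kernel would equip the non-autonomous rational system $S_{VI}(\boldsymbol{\alpha})$ with the structure of a finite cover of an abelian-variety group law defined over the constants, which is incompatible with the explicit $t$-dependence of its coefficients for generic $\boldsymbol{\alpha}$; the Okamoto/B\"acklund symmetry analysis is what makes this precise. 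This last exclusion is the delicate step, and it is where the genericity of the parameters is indispensable; once it is done, geometric triviality follows.
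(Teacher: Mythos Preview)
The paper does not prove this statement at all: it is recorded as a \emph{Fact} with citations to \cite{NagPil} and \cite{Lisovyy}, so there is no in-paper argument to compare your proposal against. Your outline is a reasonable summary of how the result is obtained in those references---absence of $K^{alg}$-points from the Lisovyy--Tykhyy classification of algebraic solutions, strong minimality from Watanabe's classification of classical (Riccati) solutions ruling out order-one subvarieties at generic parameters, and geometric triviality via the trichotomy for order-one strongly minimal sets in $DCF_0$.

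One comment on content: your exclusion of the locally modular case is the vaguest part of the sketch, and justifiably so, since it is the subtle step in \cite{NagPil}. The phrase ``incompatible with the explicit $t$-dependence'' is not by itself an argument---the Manin kernel $A^{\sharp}$ of a simple abelian variety $A$ that does \emph{not} descend to the constants is itself a perfectly good non-autonomous strongly minimal set, so $t$-dependence alone rules out nothing. The actual argument in \cite{NagPil} produces, from nonorthogonality to such an $A^{\sharp}$, a generically finite-to-one correspondence with a coset space carrying a definable group configuration, and then uses the specific geometry of the $P_{VI}$ fibration (in particular the Okamoto symmetries and the structure of the associated isomonodromy foliation) to obtain a contradiction. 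If you intend to present a self-contained proof rather than a citation, that step needs to be made precise; otherwise, citing the Fact as the paper does is the honest route.
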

 
 We are now ready for the proof of our main result for the generic sixth Painlev\'e equation.
 
 \begin{prop}\label{mainPVI}
Suppose that $\alpha_0,\alpha_1,\alpha_3,\alpha_4\in\mathbb{C}$ are transcendental and algebraically independent over $\mathbb{Q}$. If $y_{1},...,y_{n}\in X_{VI}(\boldsymbol{\alpha})$ are distinct, then $tr.deg_{K}K(y_1,y'_1,\ldots,y_n,y'_n)=2n$.
\end{prop}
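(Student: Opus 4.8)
The plan is to run the argument of Proposition \ref{mainPIII} almost verbatim, with the integer-shift parameter $v_1$ now played by $\alpha_0$. Put $F=\mathbb{Q}(\alpha_0,\alpha_1,\alpha_3,\alpha_4,t)$, and note that $\alpha_2\in F$ because of the linear relation $\alpha_0+\alpha_1+2\alpha_2+\alpha_3+\alpha_4=1$. By Fact \ref{P6Trivial}, $X_{VI}(\boldsymbol{\alpha})$ is strongly minimal, geometrically trivial, and has no solution in $K^{alg}$. Geometric triviality reduces the claim to the case $n=2$: it suffices to show that any two distinct $y_1,y_2\in X_{VI}(\boldsymbol{\alpha})$ satisfy $tr.deg_{F}F(y_1,y_1',y_2,y_2')=4$. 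Arguing by contradiction, and using strong minimality, I would assume this fails, so that $y_1\in F\gen{y_2}^{alg}$ with $y_1\neq y_2$.

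The second step is the specialization of $\alpha_0$. As in the $P_{III}$ proof, fix an $L_{\partial}$-formula $\phi$, with $x,z$ ranging over the sort $X_{VI}$, that witnesses both $y_1\neq y_2$ and the algebraicity of $y_1$ over $F\gen{y_2}$, so $\exists^{m}x\,\phi(x,y_2,\boldsymbol{\alpha},t)$ holds for some $m\in\mathbb{N}$. Strong minimality together with the absence of $K^{alg}$-solutions upgrades this to the universal statement, which I regard as a definable condition $\theta(z_0)$ on the single parameter $\alpha_0$ (the remaining $\alpha_1,\alpha_3,\alpha_4,t$ held fixed and $\alpha_2$ determined from the linear relation), true at $z_0=\alpha_0$. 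Since the constants form a stably embedded pure algebraically closed field in $DCF_0$, the set $\{z_0\in\mathbb{C}:\mathcal{U}\models\theta(z_0)\}$ is finite or cofinite; it cannot be finite, for a finite such set would consist of points algebraic over $\mathbb{Q}(\alpha_1,\alpha_3,\alpha_4)$ whereas $\alpha_0$ is transcendental over this field. Hence it is cofinite, and as $2\mathbb{Z}$ is infinite I may pick $\tilde{\alpha}_0\in 2\mathbb{Z}$ with $\theta(\tilde{\alpha}_0)$ still true. The point of choosing $\tilde\alpha_0\in 2\mathbb{Z}$ is that $\alpha_1,\alpha_3,\alpha_4$ remain transcendental and algebraically independent over $\mathbb{Q}$, so the second part of the Riccati fact above furnishes the order-one subvariety $\mathcal{R}(\tilde{\boldsymbol{\alpha}})$, with $\tilde{\boldsymbol{\alpha}}=(\tilde{\alpha}_0,\alpha_1,\tilde{\alpha}_2,\alpha_3,\alpha_4)$, having no solutions in $K^{alg}$.

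The final step extracts the contradiction from Proposition \ref{Thirdprop}. Take any $p\in\mathcal{R}(\tilde{\boldsymbol{\alpha}})\subseteq X_{VI}(\tilde{\boldsymbol{\alpha}})$; then $\theta(\tilde{\alpha}_0)$ supplies $q\in X_{VI}(\tilde{\boldsymbol{\alpha}})$ with $q\neq p$ and $q\in\tilde{F}\gen{p}^{alg}$, where $\tilde{F}=\mathbb{Q}(\tilde{\alpha}_0,\alpha_1,\alpha_3,\alpha_4,t)$. Because $p$ lies on the order-one subvariety, $tr.deg_{\tilde{F}}\tilde{F}\gen{p}=1$, and since $\tilde{F}\gen{p}^{alg}$ is a differential field containing $q$ we get $tr.deg_{\tilde{F}}\tilde{F}\gen{q}\leq 1$; the value $0$ is excluded, as it would put $q\in\tilde{F}^{alg}\subseteq K^{alg}$. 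Thus $q$ is a non-algebraic order-one solution, which forces $q\in\mathcal{R}(\tilde{\boldsymbol{\alpha}})$. Then $p$ and $q$ are distinct solutions of $\mathcal{R}(\tilde{\boldsymbol{\alpha}})$ that are algebraically dependent over $K$, contradicting Proposition \ref{Thirdprop}.

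The step I expect to be most delicate is the clause that $q$ is \emph{forced} onto $\mathcal{R}(\tilde{\boldsymbol{\alpha}})$: one must know that at these semi-generic parameters every non-algebraic order-one solution of $X_{VI}(\tilde{\boldsymbol{\alpha}})$ lies on the single Riccati subvariety $\mathcal{R}(\tilde{\boldsymbol{\alpha}})$, and not on some other order-one subvariety produced by a different B\"acklund shift. This is the exact $P_{VI}$ counterpart of the corresponding assertion in the proof of Proposition \ref{mainPIII}, and it relies on the classification of the order-one subvarieties of $S_{VI}$ from \cite{NagPil} and \cite{Lisovyy}. A secondary point to verify is that moving only $\alpha_0$ to an even integer simultaneously preserves the truth of $\theta$ and the transcendence of $\alpha_1,\alpha_3,\alpha_4$; this is automatic, since $\alpha_0$ is algebraically independent from the parameters that are kept fixed.
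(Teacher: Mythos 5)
Your proposal is correct and follows essentially the same route as the paper's proof: specialize only $\alpha_0$ to an even integer $\tilde{\alpha}_0$ using the finiteness of exceptions for the definable condition $\theta$ on the constants, then use a point $p\in\mathcal{R}(\tilde{\boldsymbol{\alpha}})$ and the resulting $q$ to contradict Proposition~\ref{Thirdprop}. The details you fill in --- building $x\neq z$ into $\phi$ so that $q\neq p$, the finite-or-cofinite argument via stable embeddedness of the constants, and the order/transcendence computation forcing $q\in\mathcal{R}(\tilde{\boldsymbol{\alpha}})$ (which, as you note, rests on the classification of order-one subvarieties) --- are exactly the steps the paper leaves to the reader by analogy with Proposition~\ref{mainPIII}.
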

\begin{proof} The proof is very similar to that of Proposition \ref{mainPIII} and so we will leave some details for the reader. Let $\alpha_0,\alpha_1,\alpha_3,\alpha_4\in\mathbb{C}$ be algebraically independent over $\mathbb{Q}$ and let $F$ denotes the field $\mathbb{Q}(\alpha_0,\alpha_1,\alpha_3,\alpha_4,t)$. Using Fact \ref{P6Trivial}, it suffices to show that if $y_1,y_2\in X_{VI}(\boldsymbol{\alpha})$ are distinct, then $tr.deg_{F}F(y_1,y'_1,y_2,y'_2)=4$. So let $y_1$ and $y_2$ be distinct and for contradiction suppose that $y_1\in F\gen{y_2}^{alg}$. As before we have an $L_{\partial}$-formula $\phi(x,z,z_1,z_2,z_3,z_4,z_5)$ such that $\phi(y_1,y_2,\alpha_0,\alpha_1,\alpha_3,\alpha_4,t)$ and $\forall z\exists^{m}x\phi(x,z,\alpha_0,\alpha_1,\alpha_3,\alpha_4,t)$ hold. We denote by $\theta(z_1,z_2,z_3,z_4,z_5)$ the formula $\forall z\exists^{m}x\phi(x,z,z_1,z_2,z_3,z_4,z_5)$. 
\par As $\alpha_0,\alpha_1,\alpha_3,\alpha_4$ are algebraically independent over $\mathbb{Q}(t)$, for all but finitely many $z_1\in\mathbb{C}$ we have that $\mathcal{U}\models\theta(z_1,\alpha_1,\alpha_3,\alpha_4,t)$. In particular for some $\tilde{\alpha}_0\in 2\mathbb{Z}$, we have that $\theta(\tilde{\alpha}_0,\alpha_1,\alpha_3,\alpha_4,t)$ holds. So if we take $p\in \mathcal{R}(\tilde{\alpha}_0,\alpha_1,\alpha_2,\alpha_3,\alpha_4)$, then there is $q\in \mathcal{R}(\tilde{\alpha}_0,\alpha_1,\alpha_2,\alpha_3,\alpha_4)$ such that $q\in\mathbb{Q}(\tilde{\alpha}_0,\alpha_1,\alpha_3,\alpha_4,t)\gen{p}^{alg}$ (here we explicitly write $\mathcal{R}(\tilde{\alpha}_0,\alpha_1,\alpha_2,\alpha_3,\alpha_4)$ instead of $\mathcal{R}(\boldsymbol{\alpha})$). But this contradicts Proposition \ref{Thirdprop}.
\end{proof}


\end{document}